\newtheorem{thm}{Theorem}[section]
\newtheorem{rem}[thm]{Remark}
\newtheorem{lem}[thm]{Lemma}
\newtheorem{prop}[thm]{Proposition}
\newtheorem{defn}{Definition}[section]
\numberwithin{equation}{section}
 \newcommand{\al}{\alpha}
 \newcommand{\de}{\delta}
 \newcommand{\Si}{\Sigma}
 \newcommand{\si}{\sigma}
 \newcommand{\ga}{\gamma}
 \renewcommand{\th}{\theta}
 \newcommand{\Real}{\mathbb{R}}
 \newcommand{\Complex}{\mathbb{C}}
 \def\<{\left\langle} \def\>{\right\rangle}
 \def\({\left(} \def\){\right)}
 \newcommand{\n}{\nabla}
 \newcommand{\p}{\partial}
\numberwithin{equation}{section}
\renewcommand{\t}[1]{\tilde{#1}}
\title[Harmonic maps from degenerating Riemann surfaces with boundary]{Harmonic maps with free boundary from degenerating bordered Riemann surfaces}
\author{Lei Liu}
\address{Albert-Ludwigs-Universit\"{a}t Freiburg, Mathematisches Institut, Ernst-Zermelo-Strasse 1, D-79104, Freiburg im Breisgau, Germany}%
\email{lei.liu@math.uni-freiburg.de}%
\author{Chong Song}
\address{School of Mathematical Sciences, %
  Xiamen University, %
Xiamen 361005, P. R. China}%
\email{songchong@xmu.edu.cn}
\author{Miaomiao Zhu}
\address{School of Mathematical Sciences, %
  Shanghai Jiao Tong University, %
Shanghai 200240, P. R. China}%
\email{mizhu@sjtu.edu.cn}%
\subjclass[2010]{58E15, 35J50, 35R35}%
\keywords{harmonic map, degenerating bordered Riemann surface, free boundary, blow-up}%
\date{\today}
\begin{document}
\allowdisplaybreaks[4]
\begin{abstract}
We study the blow-up analysis and qualitative behavior for a sequence of harmonic maps with free boundary from degenerating bordered Riemann surfaces with uniformly bounded energy. With the help of Pohozaev type constants associated to harmonic maps defined on degenerating collars, including vertical boundary collars and horizontal boundary collars, we establish a generalized energy identity.
\end{abstract}

\maketitle

\section{Introduction}\label{sec:intro}

\

Let $(\Si, c)$ be a compact bordered Riemann surface with smooth boundary $\partial \Si$ and with complex structure $c$. Let $(N, g)$ be a compact Riemannian manifold. Let $K\subset N$ be a $k-$dimensional closed submanifold with $1\leq k\leq n$ and denote
\[ C(K)=\left \{ \  u\in C^2(\Sigma, N);   \  u(\p\Si)\subset K    \   \right \}.\]
Let $h$ be a Riemannian metric on $\Si$ which is compatible with the complex structure $c$. A critical point of the energy functional \begin{equation}\label{functional}
E(u)=\int_\Sigma|\n u|^2dvol_h
\end{equation}
over the space $C(K)$ is called a \emph{harmonic map with free boundary} on $K$.

By Nash's embedding theorem, we embed $(N,h)$ isometrically into some Euclidean space $\mathbb{R}^N$. Then the Euler-Lagrange equation of the functional \eqref{functional} is
\[
\Delta_h u=A(u)(\nabla u,\nabla u),
\]
where $A$ is the second fundamental form of $N\subset \mathbb{R}^N$ and $\Delta_h$ is the Laplace-Beltrami operator on $M$ which is defined as follows
$$\Delta_h:=-\frac{1}{\sqrt{h}}\frac{\partial}{\partial x^\beta}\left(\sqrt{h}h^{\alpha\beta}\frac{\partial}{\partial x^\alpha}\right).$$
Moreover, for $1\leq k\leq n-1$, $u$ has free boundary $u(\partial \Si)$ on $K$, namely, the following holds
\begin{align}\label{FB}
u(x)\in K,\quad du(x)(\overrightarrow{n})\perp T_{u(x)}K, \quad \forall \ x\in\partial \Si,
\end{align}
where $\overrightarrow{n}$ is the outward unit normal vector on $\partial \Si$ and $\perp$ means orthogonal. For $k=n$, $u$ satisfies a homogeneous Neumann boundary condition on $K$, that is
\begin{align}
u(x)\in K,\quad du(x)(\overrightarrow{n})=0, \quad \forall \ x\in\partial \Si.
\end{align}

The tension field $\tau(u)$ of a map $u$ is defined by
\begin{align}\label{HM}
&\tau(u)=-\Delta_h u+A(u)(\nabla u,\nabla u).
\end{align}
Thus, if $u: (\Si, h) \to (N,g)$ is a harmonic map with free boundary on $K$ then it satisfies the Euler-Lagrangian equation
\begin{equation}\label{eq}
  \left\{\begin{aligned}
    &\tau(u)=0&\text{~in~}\Si;\\
    &du(\vec{n}) \ \bot  \ T_uK&\text{~on~}\p\Si,
  \end{aligned}\right.
\end{equation}
where $\tau(u)$ is the tension field of $u$. It is well known that both the energy functional \eqref{functional} and the harmonic map system \eqref{eq} are conformally invariant, hence they are independent of the choice of the metric $h$ in the same conformal class $c=[h]$.

Now suppose $(\Si_n, c_n)$ is a sequence of bordered Riemann surface of the same topological type $(g,k)$ such that
\begin{equation}\label{g-bar}
\bar{g}=2g+k-1>1,
\end{equation}
here $g$ is the genus of $\Sigma_n$, $k$ is the number of components of $\partial \Sigma_n$ and $\bar{g}$ is the genus of the complex double $\Si_n^c$ of $\Si_n$ (see Section \ref{sec:bordered-Riemann-surf}).
Let $u_n:\Si_n\to N$ be a sequence of harmonic maps with free boundary on the submanifold $K\subset N$, whose energy is uniformly bounded
$$E(u_n)\le \Lambda<\infty.$$

When the domain surface is fixed, namely $\Si_n=\Si$, it was shown in \cite{jost-Liu-Zhu} that the sequence $u_n$ converges to a limit harmonic map modulo finitely many bubbles, i.e. harmonic spheres or harmonic disks with free boundary on $K$. Moreover, the energy identity and the no neck property hold, extending the classical case of a closed domain surface developed in \cite{SU, Jost1991, parker1996bubble} etc. and the case of harmonic functions \cite{LP}.

If we allow the conformal structure $c_n$ on the domain surface to vary. Then, the conformal structure might degenerate and in such a case, the limit surface is a nodal bordered Riemann surface. When the Riemann surface $(\Si_n, c_n)$ is closed, by the uniformization theorem, there is a hyperbolic metric $h_n$ in this conformal class $c_n$. It is well-known that the degeneration of conformal structures $(\Si_n, h_n, c_n)$ is obtained by shrinking finitely many simple closed geodesics $\ga_n$ (for simplicity of notation, we assume there is only one such geodesic) to a point, which is known as the node. Moreover, there is a collar area near $\ga_n$ which is conformal to a long standard cylinder $$P_n=[-T_n,T_n]\times S^1.$$
In \cite{Zhu}, the asymptotic behaviour of the maps in the limit process was successfully characterized by associating to each $P_n$ a quantity, called Pohozaev type constant, defined by the Hopf differential of the map. More precisely, by integrating the Hopf differential on a slice of $P_n$, one gets a constant\:
\[
\alpha_n:=\int_{\{t\}\times S^1}\left(|\partial_t u_n|^2-|\partial_\theta u_n|^2 \right)d\theta, \quad \text{~for~} P_n,
\]
then the limit energy and length of $u_n$ on $P_n$ can be expressed in terms of $\al_n$ and $T_n$. This result can be considered as a 2nd order extension of Gromov's compactness for J-holomorphic curves \cite{Gr}.

\

In the present paper, we shall investigate the situation that the underlying Riemann surface $(\Si_n, c_n)$ is bordered and in particular, the degeneration of $(\Si_n, c_n)$ occurs at the boundary. In fact, for each $(\Si_n, c_n)$, we can construct a double cover $(\tilde{\Si}_n, \tilde{c}_n)$, which is closed. Then the degeneration of $(\tilde{\Si}_n, \tilde{c}_n)$ is exactly the same as in the closed case described above. We will see that, if the degeneration happens at the boundary, then there is a collar area which is conformal to the following two types of half cylinders:

\

\noindent{\bf Type I:}  a vertical half cylinder
$$Q_n^+=[-T_n,T_n]\times [0,\pi],$$
\noindent{\bf Type II:} a horizontal half cylinder
$$P_n^+=[0,T_n]\times S^1,$$

\noindent to which we can associate a similar quantity defined in terms of the Hopf differential of the map.

More precisely, in this paper, we shall define the Pohozaev type constant associated to $u_n$ over the above two types of half cylinders (see Lemma \ref{lem:01}) by
\[
\alpha_n:=\left\{\begin{aligned}
&\int_{\{t\}\times [0,\pi]}\left(|\partial_t u_n|^2-|\partial_\theta u_n|^2 \right )d\theta,   &\text{~for~}  Q_n^+;   \\
&\int_{\{t\}\times S^1}\left(|\partial_t u_n|^2-|\partial_\theta u_n|^2 \right)d\theta,  &\text{~for~} P_n^+ .
\end{aligned}\right.
\]

\

Combined with the case of interior degeneration studied in \cite{Zhu}, now we state the following:

\begin{thm}\label{thm:main}
Let $u_n:(\Sigma_n,h_n,c_n)\to (N,g)$ be a sequence of harmonic maps with free boundary and with uniformly bounded energy $E(u_n,\Sigma_n)\leq\Lambda<\infty$, where $(\Sigma_n,h_n,c_n)$ is a sequence of compact hyperbolic Riemann surfaces with smooth boundary $\partial\Sigma_n$ and of genus $\bar{g}>1$ (see \ref{g-bar}), degenerating to a hyperbolic Riemann surface $(\Sigma,h,c)$ by the following three ways:
\begin{itemize}
\item {\bf interior degeneration:} \
collapsing finitely many pairwise disjoint  interior simple closed geodesics $\left \{\gamma_{1n}^{j},j=1,...,p_1 \right \}\subset\Sigma_n\setminus \partial\Sigma_n$;\\
\item  {\bf Type I boundary degeneration:} \
collapsing finitely many pairwise disjoint simple geodesics \\
$\left\{\gamma_{2n}^{j},j=1,...,p_2\right\}$, where each geodesic $\gamma_{2n}^{j}$, connects two points on the boundary $\partial\Sigma_n$;\\
\item  {\bf Type II boundary degeneration:} \
collapsing finitely many pairwise disjoint boundary simple closed geodesics $\left\{\gamma_{3n}^{j},j=1,...,p_3\right\}\subset \partial\Sigma_n$.
\end{itemize}
For each $j=1,2...,p_i$, $i=1,2,3$, the geodesic $\gamma_{in}^{j}$ degenerates into a pair of punctures $(\varepsilon_i^{j,1},\varepsilon_i^{j,2})$. Here we use the convention that $p_i=0$ for some $i=1,2,3$ means that such type of degeneration does not occur.

Denote the $h_n$-length of $\gamma_{in}^{j}$ by $l_{in}^{j}$.  Then, after passing to a subsequence, there exist finitely many blow-up points $\{x_1,...,x_I\}$ which are away from the punctures $(\varepsilon_i^{j,1},\varepsilon_i^{j,2}),i=1,2,3;j=1,...,p_i$, and finitely many harmonic maps:
\begin{itemize}
  \item $u:(\overline{\Sigma},\overline{c})\to N$ with free boundary $u(\partial\overline{\Sigma})$ on $K$, where $(\overline{\Sigma},\overline{c})$ is the normalization of $(\Sigma, c)$;
  \item $\widetilde{v}^l_m:S^2\to N$, $l=1,2,...,\widetilde{L}_m$, near the blow-up point $x_m$, $m=1,2,...,I$;
  \item $\widetilde{w}^k_m:D_1(0)\to N$ with free boundary $\widetilde{w}^k_m(\partial D_1(0))$ on $K$, $k=1,2,...,\widetilde{K}_m$, near the blow-up point $x_m$, $m=1,2,...,I$;
  \item $v^{j,l}_i:S^2\to N$, $l=1,2,...,L^j_i$, near the puncture $(\varepsilon_i^{j,1},\varepsilon_i^{j,2}),i=1,2,3;j=1,...,p_i$ ;
 \item $w^{j,k}_i:D_1(0)\to N$ with free boundary $w^{j,k}_i(\partial D_1(0))$ on $K$, $k=1,2,...,K^j_i$, near the puncture $(\varepsilon_i^{j,1},\varepsilon_i^{j,2}),i=2,3;j=1,...,p_i$ ;
\end{itemize}
such that $u_n$ converges to $u$ in $C^\infty_{loc}(\Sigma\setminus \{x_1,...,x_I\})$ and the following generalized energy identity holds
\begin{align*}
\lim_{n\to\infty}E(u_n)=E(u)&+\sum_{m=1}^{I}\sum_{l=1}^{\widetilde{L}_m}E(\widetilde{v}^l_m) +\sum_{m=1}^{I}\sum_{k=1}^{\widetilde{K}_m}E(\widetilde{w}^k_m)\\
&+\sum_{i=1}^{3}\sum_{j=1}^{p_i}\sum_{l=1}^{L_i^j}E(v^{j,l}_i) +\sum_{i=2}^{3}\sum_{j=1}^{p_i}\sum_{k=1}^{K_i^j}E(w^{j,k}_i) \\
&+\sum_{j=1}^{p_1}\lim_{n\to\infty}\frac{2\pi^2}{l^j_{1n}}\alpha^j_{1n} +\sum_{i=2}^3\sum_{j=1}^{p_i}\lim_{n\to\infty}\frac{\pi^2}{l^j_{in}}\alpha^j_{in},
\end{align*}
where $\alpha^j_{in} $ is the Pohozev type constant defined on the collar area near  $\gamma^j_{in} $, $i=1, 2,3$,  $j=1,2,...,p_i$.
\end{thm}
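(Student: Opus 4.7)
The plan is to combine a Mumford-style thick--thin decomposition of the degenerating hyperbolic surfaces $(\Sigma_n, h_n)$ with a bubble tree analysis on the thick part and a refined neck analysis via the Pohozaev type constant on the thin collars. The paper already singles out three collar models: the interior cylinder $P_n$, the vertical boundary half-cylinder $Q_n^+$, and the horizontal boundary half-cylinder $P_n^+$. Since the case of $P_n$ is handled in \cite{Zhu} and the free boundary blow-up analysis on surfaces with fixed conformal structure is handled in \cite{jost-Liu-Zhu}, I would reduce everything to these three local models and carry out the analogous analysis on $Q_n^+$ and $P_n^+$, where the free boundary condition \eqref{FB} enters crucially.

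First, by the Collar Lemma for hyperbolic surfaces with geodesic boundary (applied either directly on $\Sigma_n$ or after passing to the complex double $\tilde\Sigma_n^c$ and using the anti-holomorphic involution to descend), write $\Sigma_n = \Sigma_n^{\mathrm{thick}} \cup \Sigma_n^{\mathrm{thin}}$, where each component of $\Sigma_n^{\mathrm{thin}}$ is conformally isometric to one of $P_n$, $Q_n^+$, $P_n^+$ corresponding to the three families of geodesics $\gamma_{in}^j$, and $\Sigma_n^{\mathrm{thick}}$ has uniformly bounded geometry and converges smoothly to the normalization $\bar\Sigma$ away from the punctures. On $\Sigma_n^{\mathrm{thick}}$, the standard $\epsilon$-regularity, energy concentration and blow-up scheme of \cite{jost-Liu-Zhu} produces a finite set of interior and boundary blow-up points $\{x_1,\dots,x_I\}$, the limit harmonic map $u$ on $\bar\Sigma$, and the bubbles $\tilde v_m^l$ (harmonic spheres) and $\tilde w_m^k$ (harmonic discs with free boundary on $K$), with energy identity and no-neck property holding locally on the thick part.

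Next, on each thin collar I would perform induction on energy to extract all bubbles $v_i^{j,l}$ (harmonic spheres, for $i=1,2,3$) and $w_i^{j,k}$ (harmonic discs with free boundary on $K$, only for $i = 2, 3$ where an actual free boundary arc is present inside the collar) following the Ding--Tian/Parker rescaling scheme adapted to the free boundary setting. After all bubbles have been removed, the residual map on each rescaled sub-collar has small energy. The key tool is now the Pohozaev constant $\alpha_{in}^j$. One checks that $\alpha_{in}^j$ is independent of the horizontal slice by differentiating under the integral and using the harmonic equation together with the free boundary condition on $Q_n^+$ (at $\theta=0,\pi$ the tangential--normal product $\partial_t u \cdot \partial_\theta u$ vanishes since $\partial_\theta u$ is tangent and $\partial_t u$ is orthogonal to $K$) and on $P_n^+$ (analogous vanishing at $t=0$). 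Writing $u_n = u_n^0(t) + \sum_{k\neq 0} u_n^k(t)e^{ik\theta}$ on $P_n^+$ and the corresponding cosine expansion on $Q_n^+$, the small-energy regime yields exponential decay of the non-zero Fourier modes by a three-circle (or three-interval) argument, so the map is approximated by its zero mode, which satisfies an ODE whose first integral is precisely $\alpha_{in}^j$. Integrating over a collar of geodesic length $l_{in}^j$ then produces the energy contribution $\pi^2\alpha_{in}^j / l_{in}^j$ for the half-cylinder cases $Q_n^+$ and $P_n^+$, with the factor of $2$ doubling giving $2\pi^2\alpha_{1n}^j/l_{1n}^j$ on the full cylinder $P_n$. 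Summing the thick-part bubbles, the collar bubbles, and these Pohozaev neck contributions gives the stated identity.

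The main obstacle I expect is the neck analysis on the boundary collars $Q_n^+$ and $P_n^+$, where three points need substantial care: (i) proving constancy of $\alpha_{in}^j$ along slices despite the presence of two distinct free boundary arcs on $Q_n^+$, which requires a precise matching of the Hopf differential with condition \eqref{FB} on each horizontal edge; (ii) establishing a uniform three-circle/three-interval decay lemma for harmonic maps with free boundary, so that small-energy regions are rigidly controlled by $\alpha_{in}^j$ up to exponentially small errors along the whole collar of length $2T_n$ or $T_n$; and (iii) a clean accounting of disc bubbles at the boundary puncture $\varepsilon_i^{j,1}$ on $P_n^+$, which is itself a free boundary component, to avoid double counting with boundary bubbles produced from the thick side. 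Once these three ingredients are in place, summing the contributions across all components of $\Sigma_n^{\mathrm{thick}}$ and all three collar types yields the generalized energy identity.
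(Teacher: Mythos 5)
Your overall scheme matches the paper's: thick--thin decomposition via the hyperbolic collar lemma, thick-part bubble tree convergence from \cite{jost-Liu-Zhu} producing $u$, $\widetilde v^l_m$, $\widetilde w^k_m$, bubble-domain/neck-domain decomposition of each thin collar producing $v_i^{j,l}$, $w_i^{j,k}$, and the Pohozaev-constancy lemma (your integration-by-parts argument is exactly the paper's Lemma~\ref{lem:01}). One small slip: on $Q_n^+$ the roles of $\partial_t u$ and $\partial_\theta u$ are the reverse of what you wrote --- on $\theta\in\{0,\pi\}$ the curve $t\mapsto u_n(t,\theta)$ lies in $K$, so $\partial_t u_n$ is tangent to $K$ and the normal derivative $\partial_\theta u_n$ is perpendicular to $T_{u_n}K$; the vanishing of $\partial_t u_n\cdot\partial_\theta u_n$ still holds, so the conclusion is unaffected.

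The genuine divergence, and the place where your outline has a gap, is the no-concentration neck estimate on the vertical collar $Q_n^+$. You propose a cosine Fourier expansion plus a three-circle/three-interval exponential decay lemma, reducing to an ODE for the zero mode; and you flag item (ii) --- establishing such a decay lemma in the free boundary setting --- as an obstacle you expect to have to overcome but do not resolve. Two points. First, the free boundary condition \eqref{FB} at $\theta\in\{0,\pi\}$ is nonlinear (it prescribes $u_n(\cdot,0)\in K$ and $\partial_\theta u_n(\cdot,0)\perp T_{u_n}K$, which is neither a Neumann nor a Dirichlet condition on the components of $u_n$ in $\mathbb R^N$), so a cosine expansion does not match the boundary data, and one must first perform some reflection before Fourier analysis on a full circle is even available. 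Second, and more fundamentally, the paper never proves such a decay lemma at all. Instead (Proposition~3.4 and Theorem~\ref{thm:01}), it reflects $u_n$ across $K$ via the involution $\sigma$ to obtain $\widehat u_n$ on the full cylinder, which satisfies the quasi-harmonic equation $\Delta\widehat u_n+\Upsilon_{\widehat u_n}(\nabla\widehat u_n,\nabla\widehat u_n)=0$ with $|\Upsilon|\le C|\nabla\widehat u_n|^2$; it then tests this equation against $\widehat u_n-\widehat u_n^*$, uses Cauchy--Schwarz to get $\int\nabla\widehat u_n\cdot\nabla(\widehat u_n-\widehat u_n^*)\ge\int\big(|\nabla\widehat u_n|^2-|\partial_t\widehat u_n|^2\big)=\int|\partial_\theta\widehat u_n|^2$, absorbs the quadratic nonlinearity and the boundary terms using the oscillation bound from Lemma~\ref{lem:small-energy-regularity}, and finally identifies $\int|\nabla u_n|^2 - 2(T_n-1)\alpha_n = 2\int|\partial_\theta u_n|^2$ via the Pohozaev constant. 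This is Zhu's test-function argument transported to the free boundary setting by reflection, and it closes the estimate with no Fourier decay input whatsoever. So while your route is in the spirit of Chen--Li--Wang and could in principle be made to work, as written it outsources its hardest step to an unproved three-circle lemma; if you instead adopt the reflection-plus-test-function argument you can dispense with that lemma entirely and your sketch becomes a proof along the lines of the paper's.
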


\begin{rem}
Theorem~\ref{thm:main} can be compared with the compactness results of (twisted) holomorphic curves with Lagrangian boundary conditions (cf.~\cite{thesis-Liu, Xu}), which can be viewed as a 1st order analog of the case of harmonic maps with free boundary investigated in this paper.
\end{rem}

The rest of the paper is organized as follows. In Section \ref{sec:bordered-Riemann-surf}, we describe the geometric structure of nodal bordered Riemann surface and the two types of boundary nodes. In section \ref{sec:proof-of-thm}, we develop some analytic properties of harmonic maps from two types of long half cylinders and then prove our main Theorem \ref{thm:main}.

\

\section{Geometric structure of nodal bordered Riemann surface }\label{sec:bordered-Riemann-surf}

\

In this section, we collect some facts about nodal bordered Riemann surfaces and the two types of boundary nodes as well as the corresponding two types of boundary collar regions. For more details, we refer to e.g. \cite{thesis-Liu,Katz-Liu}.

\subsection{Bordered Riemann surface}

A smooth bordered Riemann surface $\Sigma$ is said to be of type $(g, k)$ if $\Sigma$ is topologically a sphere attached with $g$
handles and $k$ disks removed. It is topologically equivalent to a compact surface of genus $g$ with $k$ punctures.

For any bordered Riemann surface $\Sigma$, there exists a compact double cover surface $\Si^c$ with an anti-holomorphic involution map $\si: \Si^c\to \Si^c$ such that $\Si=\Si^c/\si$, and $\p\Si$ is just the fixed point set of $\si$. Given a type $(g, k)$ smooth bordered Riemann surface $\Sigma$, the genus of its complex double $\Si^c$ is $\tilde{g} = 2g + k - 1$. In fact, we have

\begin{thm}
Let $\Si$ be a bordered Riemann surface. There exists a double cover
$\pi:\Si^c\to \Si$ of $\Si$ by a compact Riemann surface $\Si^c$ and an anti-holomorphic involution
$\si:\Si^c\to \Si^c$ such that $\pi\circ\si=\pi$. There is a holomorphic embedding $\iota:\Si\to \Si^c$
such that $\pi\circ\iota$ is the identity map. The triple $(\Si^c,\pi,\sigma)$ is unique up to isomorphism.
\end{thm}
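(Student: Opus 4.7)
The plan is to construct $\Sigma^c$ explicitly as the \emph{Schottky double} obtained by gluing $\Sigma$ to a mirror copy along $\partial \Sigma$, and then to verify uniqueness by checking that any other double admits a canonical biholomorphism to this one. First I would form the disjoint union $\Sigma \sqcup \bar{\Sigma}$, where $\bar{\Sigma}$ denotes the same underlying smooth surface equipped with the conjugate complex structure (charts $\varphi$ of $\Sigma$ replaced by $\bar{\varphi}$), and then identify $\Sigma$ with $\bar{\Sigma}$ along $\partial \Sigma$ via the identity map. The resulting space $\Sigma^c$ is a compact topological surface of genus $2g+k-1$, carrying a natural topological involution $\sigma$ that interchanges the two copies and fixes $\partial \Sigma$ pointwise, together with the projection $\pi$ that sends $\sigma$-orbits to $\Sigma$.

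The main step is to equip $\Sigma^c$ with a complex structure making $\sigma$ anti-holomorphic. On the interior of $\Sigma$ I would retain the original holomorphic atlas, and on the interior of $\bar{\Sigma}$ the conjugate atlas. Near a boundary point $p \in \partial \Sigma$, I would choose a boundary chart $\varphi : U \to \{z \in \mathbb{C} : |z| < 1, \; \mathrm{Im}\, z \geq 0\}$ sending $U \cap \partial \Sigma$ to the real interval $(-1,1)$, and extend it to a chart $\widetilde{\varphi}$ on $U \cup \sigma(U) \subset \Sigma^c$ by Schwarz reflection, setting $\widetilde{\varphi}(q) := \overline{\varphi(\sigma(q))}$ on $\sigma(U)$. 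The key checks are (i) compatibility between two such extended boundary charts, which reduces to the reflection principle applied to biholomorphisms of half-disks preserving the real diameter, and (ii) that $\sigma$ is given in these charts by $z \mapsto \bar{z}$, hence is anti-holomorphic. By construction $\pi \circ \sigma = \pi$, and the inclusion $\iota : \Sigma \hookrightarrow \Sigma^c$ is holomorphic with $\pi \circ \iota = \mathrm{Id}_\Sigma$.

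For uniqueness, I would observe that in any triple $(\Sigma^c_1, \pi_1, \sigma_1)$ satisfying the stated properties, $\iota_1(\Sigma)$ is an open subdomain of $\Sigma^c_1$ bounded by the fixed locus of $\sigma_1$, and $\sigma_1(\iota_1(\Sigma))$ is its complement. Given two such doubles I would define a candidate isomorphism $\Phi : \Sigma^c_1 \to \Sigma^c_2$ by $\Phi := \iota_2 \circ \pi_1$ on $\iota_1(\Sigma)$ and $\Phi := \sigma_2 \circ \iota_2 \circ \pi_1 \circ \sigma_1$ on $\sigma_1(\iota_1(\Sigma))$. The two prescriptions agree on $\partial \Sigma$ (fixed under $\sigma_i$), and together give a homeomorphism that is holomorphic on $\iota_1(\mathrm{Int}\,\Sigma)$ by definition and, by the $\sigma$-equivariance built into the second formula, also on $\sigma_1(\iota_1(\mathrm{Int}\,\Sigma))$. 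Holomorphicity across the boundary then follows from the reflection principle applied in the boundary charts constructed in the previous step; $\Phi$ intertwines the projections and involutions by construction.

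The main technical obstacle is the compatibility of the Schwarz-reflected boundary charts in Step~2. This rests on the fact that the transition functions between two boundary charts of a smooth bordered Riemann surface are biholomorphisms of half-disks whose restrictions to the real diameter are smooth (indeed real-analytic, as boundary values of conformal maps) real-valued diffeomorphisms of real intervals, so they extend by Schwarz reflection to biholomorphisms of the full disks. Once this compatibility is verified, the remaining verifications of anti-holomorphicity of $\sigma$, of the identities $\pi\circ\sigma=\pi$ and $\pi\circ\iota=\mathrm{Id}_\Sigma$, and of the uniqueness statement are all formal.
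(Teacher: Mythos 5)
Your construction is correct and coincides with the one the paper itself sketches: the paper's ``proof'' is only a citation to Alling--Greenleaf and Sepp\"al\"a--Sorvali, and the construction it describes immediately afterwards (the orientable covering $\Sigma^o$, which for an orientable bordered Riemann surface is two oppositely oriented copies of $\Sigma$ glued along $\partial\Sigma$) is exactly your Schottky double. The two points you single out as the technical crux --- compatibility of the Schwarz-reflected boundary charts, and holomorphicity of the comparison map $\Phi$ across the fixed locus of $\sigma_1$ --- are indeed where the content lies, and your treatment of both via the reflection principle is sound.
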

\begin{proof}
See \cite{book-AG}[Theorem 1.6.1] and also \cite{book-SS}.
\end{proof}

Actually, the double cover $\Si^c$ is decided only by the differentiable structure of $\Si$, which can be constructed as follows. First, take a maximal atlas $\mathcal{U}=\{U_i \ | \ i\in I\}$ of $\Si$, such that any compatible atlas of $\Si$ is contained in $\mathcal{U}$. Then the coordinate charts in $\mathcal{U}$ can be divided into two classes according to the orientation. Now let $S=\amalg_{i\in I}U_i$ be the disjoint union. For each point in $U_i\cap U_j$, we identify the corresponding points in $U_i$ and $U_j$ if their transition map is orientation-preserving. In this way, we obtain the orientable covering $\Si^o$.

If $\p\Si\neq\emptyset$, then for all $p\in \p\Si$, we identify the corresponding two points in $\Si^o$ to get the complex double $\Si^c$, which is closed. The covering $\pi:\Si^o\to \Si$ induces a
mapping $\pi: \Si^c \to \Si$ which is a ramified double covering of $\Si$. It is a local
homeomorphism at all points $p\in \Si^c$ for which $\pi(p)\notin \p\Si$. At points lying
over the boundary of $\Si$, the projection is a folding similar to the mapping
$x + iy\to x + i|y|$ at the real axis.

If $\Si$ is orientable, then $\Si^o$ is a trivial double covering of $\Si$, which is disconnected and consists of two copies of $\Si$ with opposite orientations. Then $\Si^c$ is obtained by simply gluing the boundary of the two components.

\

\subsection{Nodes}

A nodal bordered Riemann surface is of type $(g, k)$ if it is
a degeneration of a smooth bordered Riemann surface of the same type.

Let $\Sigma$ be a (smooth) bordered Riemann surface of type $(g, k)$, then the following are
equivalent:
\begin{itemize}
  \item $\Sigma$  is stable, i.e., $Aut(\Sigma)$ is finite
  \item Its complex double $\Sigma^c$ is stable
  \item The genus $\tilde{g} = 2g + k - 1$ of $\Sigma^c$ is greater than one
  \item The Euler characteristic $\chi(\Sigma)=2-2g-k$ is negative
\end{itemize}

 There are three types of nodes for a nodal bordered Riemann surface. Let $(z, w)$ be the coordinate on $\mathbb{C}^2$
 and $\si(z, w) = (\bar{z}, \bar{w})$ be the complex
conjugation. A node on a bordered Riemann surface is a singularity which is locally isomorphic to one of
the following:
\begin{enumerate}
  \item Interior node: $(0,0)\in \{zw=0\}$
  \item Type I boundary node: $(0,0)\in \{z^2-w^2=0\}/\si$
  \item Type II boundary node: $(0,0)\in \{z^2+w^2=0\}/\si$
\end{enumerate}

\

\subsection{Interior nodes}

Let $U$ be a neighborhood of $(0,0)\in \Complex^2$ and $\Sigma=\{(z,w)\in U|zw=0\}$. Then $\Sigma$ has two components $\Sigma_1=\{(z,w)\in U|z=0\}$ and $\Sigma_2=\{(z,w)\in U|w=0\}$, which are attached at the nodal point $\Sigma_1\cap \Sigma_2=\{(0,0)\}$. Note that the metric near the node on each component is flat.

\

\subsection{Type I boundary nodes}

Let $\Sigma=\{(z,w)\in U|z^2-w^2=0\}$. Then $\Sigma$ has two components $\Sigma_1=\{(z,w)\in U|z=w\}$ and $\Sigma_2=\{(z,w)\in U|z=-w\}$, which are attached at the point $\Sigma_1\cap \Sigma_2=\{(0,0)\}$. The map $\si$ gives an involution ($\si^2=id$) on each component $\Sigma_i$. The fixed point set of $\si$ is $F=\{(z,w)\in \Complex^2| Im(z)=Im(w)=0\}$. The boundary of $\Sigma_1/\si$ is just the fixed point set of the involution in $\Sigma_1$, i.e. $\ga_1=F\cap \Sigma_1=\{z=w\in \Real^1\}$, which is a 1-dimensional curve. Similarly, the boundary of $\Sigma_2/\si$ is the curve $\ga_2=F\cap \Sigma_2=\{z=-w\in \Real^1\}$. Thus the boundary curves $\ga_1$ and $\ga_2$ intersects at the node $(0,0)$.

By sending $(z,w)$ to $(z-w,z+w)$, it is clear that the surface $\Si$ here is isomorphic to the one in the interior node case. Indeed, $\Si$ is the complex double of $\Si/\si$. Thus by using the involution map $\si$, the picture of Type 1 boundary node is actually a (\emph{vertical}) half of the interior node case.

\

\subsection{Type II boundary nodes}

Let $\Sigma=\{(z,w)\in U|z^2+w^2=0\}$. Then $\Sigma$ has two components $\Sigma_1=\{(z,w)\in U|z=iw\}$ and $\Sigma_2=\{(z,w)\in U|z=-iw\}$, which are attached at the point $\Sigma_1\cap \Sigma_2=\{(0,0)\}$. The map $\si$ gives an anti-holomorphic ivolution between $\Sigma_1$ and $\Sigma_2$. The only fixed point of $\si$ on $\Si$ is the node $(0,0)$. Thus the boundary of $\Si/\si$ is simply the node itself.

By sending $(z,w)$ to $(z-iw,z+iw)$, it is clear that the surface $\Si$ here is also isomorphic to the one in the interior node case. Indeed, $\Si$ is the complex double of $\Si/\si$. Thus by using the symmetry map $\si$, the picture of Type 2 boundary node is actually a (\emph{horizontal}) half of the interior node case.

\

\subsection{Hyperbolic geometry picture near nodes}

Suppose $\Si$ is a bordered surface of general type $(g,k)$. Let $(\t{\Si}, \pi, \si)$ be the corresponding complex double which is a closed surface with genus $\t{g}=2g + k - 1>1$. Then by uniformization theorem, for each complex structure $c$ on $\Si$, there exists a hyperbolic metric $h$ on $\Si$. It is easy to see that $h$ and $c$ can be extended to its complex double $\t{\Si}$ such that $\t{h}$ and $\t{c}$ is symmetric w.r.t. the (anti-holomorphic) involution $\si$.

Now suppose $\Si_n=(\Si, h_n, c_n)$ is a sequence of degenerating hyperbolic surfaces which converges to a hyperbolic surface $\Si_\infty=(\Si_\infty, h, c)$, with finitely many nodes $\{y_1,\cdots, y_m; z_1,\cdots, z_l\}$ where $y_i$'s are on the boundary and $z_j$'s are interior nodes. Then the complex double $(\t{\Si},\t{h},\t{c})$ converges to the complex double $\t{\Si}_\infty$ of $\Si_\infty$, with nodes $\{y_1,\cdots, y_m; z_1,\cdots, z_l, z_1', \cdots, z_l'\}$ where $z_j'=\si(z_j)\in \Si_\infty'$.  By classical results, $\t{\Si}_\infty$ is obtained by pinching $m+2l$ pairwise disconnected Jordan curves to the nodes.

For simplicity, we first assume there is only one node $z$. If $z$ is an interior nodes, i.e. $z\notin \p\Si_\infty=\Si_\infty^\si$, then there exists a collar area $A_n\subset \Si_n$ near a closed geodesic $\ga_n\subset \Si_n$, which is isomorphic to a hyperbolic cylinder $P_n=[-T_n, T_n]\times S^1$ with $T_n\to\infty$ with metric (cf. \cite{Zhu, RTZ}) $$ds^2=\left(\frac{l}{2\pi\cos(\frac{lt}{2\pi})}\right)^2(dt^2+d\theta^2).$$  Moreover, $\Si_n\setminus A_n$ converges to $\Si_\infty\setminus\{z\}$ smoothly. The local geometry near the node $z$ on each component of $\Si_\infty$ is a standard hyperbolic cusp with metric $(\frac{|dz|}{|z|\ln|z|})^2$.

If $y\in \p\Si_\infty$ is a boundary node, then there also exists a collar area $\t{A}_n\subset \t{\Si}_n$ which lies in the complex double, near a closed geodesic $\t{\ga}_n\subset\t{\Si}_n$ and isomorphic to a hyperbolic cylinder $\t{P}_n=[-T_n, T_n]\times S^1$. Note that the cylinder $\t{P}_n$ is the $\de$-thin part of $\t{\Si}_n$, and hence is symmetric w.r.t. the involution $\si$, i.e. $\si^*ds^2=ds^2$. However, there are only two possible involutions on the hyperbolic cylinder $\t{P}_n$ which is anti-holomorphic as follows. (The antipodal map $\th\to \pi+\th$ is holomorphic.)

The first one corresponds to the symmetry of $\t{P}_n$ w.r.t. the horizontal lines
$$ \xi_n=  \{(t,\th) \ | \ \th=0,\pi \}\subset\t{P}_n. $$
Namely, the involution $\si:\t{P}_n\to \t{P}_n$ maps $(t,\th)$ to $(t, 2\pi-\th)$. In this case, $\xi_n$ lies in the boundary $\p\Si_n$. Let $Q^+_n=[-T_n,T_n]\times [0,\pi]$ be the vertical half of $\t{P}_n$ in $\Si_n$. Then $\Si_n\setminus Q^+_n$ converges to $\Si_\infty\setminus\{y\}$ smoothly. In hyperbolic geometry, the node is obtained by shrinking a geodesic $\ga_n^+=\{0\}\times[0,\pi]$ which connects two points at the boundary. This is exactly the type I node described in the algebraic language above.

The second one corresponds to the symmetry of $\t{P}_n$ w.r.t. the vertical middle circle
$$\t{\ga}_n=  \{(t,\th)\ | \ t=0 \}\subset\t{P}_n. $$
Namely, the involution $\si:\t{P}_n\to \t{P}_n$ maps $(t,\th)$ to $(-t, \th)$. In this case, $\t{\ga}_n$ is fixed by $\si$ and hence belongs to the boundary $\p\Si_n$. Let $P_n^+=[0,T_n]\times S_1$ be the horizontal half of $\t{P}_n$ in $\Si_n$. Then $\Si_n\setminus P_n^+$ converges to $\Si_\infty\setminus\{y\}$ smoothly. In hyperbolic geometry, the node is obtained by shrinking a boundary curve $\ga_n=\t\ga_n$ (also a closed geodesic). This is exactly the type II node described in the algebraic language.

\

\section{Harmonic maps from half cylinders and proof of Theorem \ref{thm:main}}\label{sec:proof-of-thm}

\

In this section, we develop the blow-up analysis for harmonic maps from long half cylinders and then complete the proof of Theorem \ref{thm:main}.

Let $u_n$ be a sequence of harmonic maps defined on the vertical half cylinder
$$Q_n^+=[-T_n,T_n]\times [0,\pi]$$
with free boundary $u_n([-T_n,T_n]\times \{0,\pi\})$ on $K$ and with uniformly bounded energy
$$E(u_n,Q_n^+)\leq \Lambda$$
or defined on the horizontal half cylinder
$$P^+_n=[0,T_n]\times S^1$$
with free boundary $u_n(\{0\}\times S^1)$ on $K$ and with uniformly bounded energy
$$E(u_n,P^+_n)\leq \Lambda,$$
where $$[-T_n,T_n]\times \{0,\pi\}=[-T_n,T_n]\times \{0\}\cup [-T_n,T_n]\times \{\pi\}.$$

\

For a harmonic map $u$ from a cylinder $[-T,T]\times S^1$, using the fact that the Hopf differential $\phi(u)$ is holomorphic, it was observed in \cite{Zhu} that the integration $$\int_{\{t\}\times S^1}\phi(u)d\theta$$ is independent of $t$, which is a complex constant. Here, for harmonic maps from half cylinders, by integrating by parts, we can also define a quantity independent of $t$, which plays an important role to characterize the asymptotic properties of the maps in the limit process.

\begin{lem}\label{lem:01}
If $u$ is a harmonic map defined on $Q_n^+$ with free boundary $u([-T_n,T_n]\times \{0,\pi\})$ on $K$, then $$\int_{\{t\}\times [0,\pi]}\left(|\partial_t u|^2-|\partial_\theta u|^2 \right)d\theta$$ is independent of $t$, where $\partial_t u=\frac{\partial u}{\partial t}$ and $\partial_\theta u=\frac{\partial u}{\partial \theta}$.

If $u$ is a harmonic map defined on $P^+_n$ with free boundary $u_n(\{0\}\times S^1)$ on $K$, then
$$\int_{\{t\}\times S^1}\left(|\partial_t u|^2-|\partial_\theta u|^2 \right)d\theta$$ is independent of $t$.
\end{lem}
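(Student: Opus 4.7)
The plan is to compute $\tfrac{d}{dt}$ of each integral directly and show it vanishes, using the harmonic map equation to convert second $t$-derivatives into second $\theta$-derivatives modulo components normal to $N$, and then integrating by parts in $\theta$. Since both the energy and the harmonic map system are conformally invariant in dimension two, I will work with the flat metric $dt^2+d\theta^2$ on the half cylinders throughout.

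For the vertical half cylinder $Q_n^+$, the first step is to differentiate
\[ I(t) := \int_0^\pi\bigl(|\partial_t u|^2-|\partial_\theta u|^2\bigr)\,d\theta \]
under the integral sign to obtain an expression of the form $2\int_0^\pi(\langle \partial_t u,\partial_{tt} u\rangle-\langle \partial_\theta u,\partial_{t\theta} u\rangle)\,d\theta$. Next I would invoke the harmonic map equation $\partial_{tt}u+\partial_{\theta\theta}u=A(u)(\nabla u,\nabla u)$: since the right-hand side takes values in $(T_uN)^\perp$ while $\partial_t u\in T_uN$, the pointwise identity $\langle \partial_t u,\partial_{tt} u\rangle=-\langle \partial_t u,\partial_{\theta\theta} u\rangle$ holds. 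Combining this with an integration by parts in $\theta$ applied to the second term, the two bulk integrals will cancel and leave only the boundary contribution $-2\bigl[\langle \partial_\theta u,\partial_t u\rangle\bigr]_{\theta=0}^{\theta=\pi}$.

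The decisive step --- and essentially the only nontrivial one --- is to kill this boundary term using the free boundary condition. At $\theta=0$ and $\theta=\pi$ the outward unit normal to $Q_n^+$ is $\mp\partial_\theta$, so \eqref{FB} yields $\partial_\theta u\perp T_uK$ on both horizontal pieces of $\partial Q_n^+$; on the other hand $u(t,0),u(t,\pi)\in K$ for all $t$, so differentiating in $t$ along these curves gives $\partial_t u\in T_uK$ there. Hence $\langle \partial_\theta u,\partial_t u\rangle\equiv 0$ on both pieces and $I'(t)\equiv 0$.

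The horizontal half cylinder $P_n^+$ is handled by the identical computation with $\int_0^{2\pi}$ in place of $\int_0^\pi$; in that case the boundary term in $\theta$ vanishes automatically by periodicity on $S^1$, and the free boundary on $\{0\}\times S^1$ plays no role in establishing $t$-independence. Conceptually, the lemma is just the real part of the classical statement that the Hopf differential $(|\partial_t u|^2-|\partial_\theta u|^2-2i\langle \partial_t u,\partial_\theta u\rangle)\,dz^2$ is holomorphic in $z=t+i\theta$; I expect no obstacle beyond carefully matching the sign of the outward normal on the two horizontal components of $\partial Q_n^+$.
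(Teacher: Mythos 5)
Your proof is correct and is essentially the paper's own argument in differential form: the paper integrates the identity $\langle\Delta u,\partial_t u\rangle=0$ over $[t_1,t_2]\times[0,\pi]$ and integrates by parts, while you differentiate $I(t)$ and integrate by parts in $\theta$; both reduce to the same boundary term $\langle\partial_t u,\partial_\theta u\rangle$ on $\theta\in\{0,\pi\}$, killed by exactly the observation you make ($\partial_t u\in T_uK$, $\partial_\theta u\perp T_uK$). No gap.
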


\begin{proof}
We shall only prove for first case, since the second case is similar and easier.

Since $u$ is a harmonic map defined on $Q_n$, denoting $Q_{t_1t_2}=[t_1,t_2]\times [0,\pi]$, then we have
\begin{align*}
0&=\int_{Q_{t_1t_2}}\Delta u\cdot \partial_t ud\theta dt\\
&=\int_{Q_{t_1t_2}}(\partial^2_t u+\partial^2_\theta u)\cdot \partial_t ud\theta dt\\
&=\frac{1}{2}\int_{Q_{t_1t_2}} \frac{\partial}{\partial t}(|\partial_tu|^2-|\partial_\theta u|^2) d\theta dt+\int_{t_1}^{t_2}\partial_t u\cdot\partial_\theta u|_0^{\pi}dt\\
&=\frac{1}{2}\left(\int_{\{t_2\}\times [0,\pi]} (|\partial_tu|^2-|\partial_\theta u|^2) d\theta- \int_{\{t_1\}\times [0,\pi]} (|\partial_tu|^2-|\partial_\theta u|^2) d\theta  \right),
\end{align*}
where we used the free boundary condition, i.e. $$\partial_t u\cdot \partial_\theta u|_{[-T_n, T_n]\times \{0,\pi\}}=0.$$ Then the conclusions of lemma follow immediately.
\end{proof}

\

By above lemma, we shall give the following definition of Pohozaev type constant.
\begin{defn}
The constant
\begin{equation}\label{equat:08}
\alpha_n:=\int_{\{t\}\times [0,\pi]}\left (|\partial_t u_n|^2-|\partial_\theta u_n|^2 \right)d\theta\quad  \text{~for~}  Q_n^+\end{equation} or
\begin{equation}\label{equat:07}
\alpha_n:=\int_{\{t\}\times S^1}\left (|\partial_t u_n|^2-|\partial_\theta u_n|^2 \right )d\theta\quad  \text{~for~}  P_n^+\end{equation} is called the Pohozaev type constant for $u_n$ on $Q^+_n$ or $P_n^+$, respectively.
\end{defn}

\

In this paper, we want to study the energy concentration of $u_n$ on $Q_n^+$ and $P^+_n$. Firstly, by the classical blow-up theory of harmonic maps near an interior energy concentration point \cite{DingWeiyueandTiangang}, near a (free) boundary energy concentration point \cite{jost-Liu-Zhu} and near the interior degenerating area \cite{Zhu}, it is not hard to prove the following:

\begin{thm}\label{thm:03}
$u_n:P^+_n\to N$ be a sequence of harmonic maps with free boundary $u_n(\{0\}\times S^1)$ on $K$ and with uniformly bounded energy $$E(u_n; P_n^+)\leq\Lambda,$$ where $P_n^+$ is a cylinder with standard flat metric $ds^2=dt^2+d\theta^2$ and $T_n\to\infty$ as $n\to\infty$.

Then there exist a finite harmonic spheres $v^i:S^2\to N,\ i=1,...,I$ and harmonic disks $w^j:D_1(0)\to N,\ j=1,...,J$ with free boundary $w^j(\partial D_1(0))$ on $K$ such that, after passing to a subsequence, there holds
\begin{align*}
\lim_{n\to\infty} E(u_n,P^+_n)=\sum_{i=1}^IE(v^i)+\sum_{j=1}^JE(w^j)+\lim_{n\to\infty}2\alpha_nT_n,
\end{align*}
where $\alpha_n$ is defined by \eqref{equat:07}.

\end{thm}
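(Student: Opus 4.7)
The plan is to carry out the standard three-step blow-up scheme — bubble extraction, neck analysis, assembly — on the half cylinder $P_n^+$, combining the interior blow-up theory of \cite{DingWeiyueandTiangang}, the free boundary theory of \cite{jost-Liu-Zhu}, and the Pohozaev-type neck analysis developed for closed cylinders in \cite{Zhu}.

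\textbf{Step 1 (Bubble extraction).} By the interior $\varepsilon$-regularity of \cite{DingWeiyueandTiangang} and the free boundary $\varepsilon$-regularity of \cite{jost-Liu-Zhu}, the energy concentration set of $\{u_n\}$ in $P_n^+$ is finite. At each interior concentration point I rescale to extract a harmonic sphere $v^i\colon S^2\to N$; at each concentration point on $\{0\}\times S^1$ I rescale to extract a free boundary harmonic disk $w^j\colon D_1(0)\to N$. A bubble-tree induction, terminating in finitely many steps thanks to the uniform bound $E(u_n,P_n^+)\leq\Lambda$, extracts all bubbles. After this, $P_n^+$ splits into fixed bubble neighborhoods (whose total energy tends to $\sum_i E(v^i)+\sum_j E(w^j)$) and a finite union of neck sub-cylinders $[s_n^{(\ell)},S_n^{(\ell)}]\times S^1$ with $S_n^{(\ell)}-s_n^{(\ell)}\to\infty$, on which
$$
\sup_{t\in[s_n^{(\ell)},S_n^{(\ell)}]}E\bigl(u_n,[t-1,t+1]\times S^1\bigr)<\eta
$$
for any prescribed small $\eta>0$.

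\textbf{Step 2 (Neck analysis).} On each neck, I would use the pointwise identity
$$
|\partial_t u_n|^2+|\partial_\theta u_n|^2=\bigl(|\partial_t u_n|^2-|\partial_\theta u_n|^2\bigr)+2|\partial_\theta u_n|^2,
$$
whose first summand integrates to $\alpha_n(S_n^{(\ell)}-s_n^{(\ell)})$ by Lemma~\ref{lem:01}. The angular term is controlled by a three-circle / Fourier-mode argument as in \cite{Zhu}: small-energy regularity converts the harmonic map equation into a second-order ODE inequality for the nonzero $\theta$-Fourier modes of $u_n$, yielding exponential decay
$$
\int_{\{t\}\times S^1}|\partial_\theta u_n|^2\,d\theta\leq C(\eta)\bigl(e^{-(t-s_n^{(\ell)})}+e^{-(S_n^{(\ell)}-t)}\bigr),
$$
with $C(\eta)\to 0$ as $\eta\to 0$. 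Integrating, summing over the finitely many necks, and finally letting $\eta\to 0$ after $n\to\infty$ makes the total angular-energy contribution vanish.

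\textbf{Step 3 (Assembly) and main obstacle.} The total $t$-length of the fixed-size bubble neighborhoods is $O(1)$ while $T_n\to\infty$, and the uniform energy bound forces $\alpha_n=O(1/T_n)$, so the Pohozaev parts from all necks sum to $\alpha_nT_n+o(1)$. Combined with the bubble energies and with a doubling of $u_n$ across the free boundary $\{0\}\times S^1$, identifying $P_n^+$ with the full cylinder $[-T_n,T_n]\times S^1$ (which accounts for the factor $2$ in the stated $\lim 2\alpha_nT_n$), this yields the announced identity. The main technical obstacle is the exponential decay of the angular energy in the free boundary situation: when a neck abuts a free boundary disk bubble on $\{0\}\times S^1$ its domain is a half-annulus rather than a full annulus, and the decay of the nonzero Fourier modes has to be established after reflecting $u_n$ across $\{0\}\times S^1$ via the free boundary reflection principle (cf.~\cite{jost-Liu-Zhu}), with uniform control of the Fourier constants in $n$.
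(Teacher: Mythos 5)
Your overall scheme (bubble-domain/neck-domain decomposition, Pohozaev constant on the long necks, decay of the angular energy) is exactly the route the paper intends: its own ``proof'' of this theorem is a one-line reference to \cite{DingWeiyueandTiangang}, \cite{jost-Liu-Zhu} and the cylinder analysis of \cite{Zhu,Chen-Li-Wang}, so Steps 1 and 2 of your outline are the right argument in outline. Two small corrections there: the bubble domains $[a_n^j,b_n^j]\times S^1$ are not of fixed size (their lengths go to infinity, only $o(T_n)$), which is harmless since $\alpha_n=O(1/T_n)$ still yields $\alpha_n\cdot(\text{total neck length})=\alpha_n T_n+o(1)$; and for $P_n^+=[0,T_n]\times S^1$ the long necks are full annuli, so the free boundary reflection is only needed for the single neck $[0,a_n^1]\times S^1$ abutting the circle $\{0\}\times S^1$ and for the half-annular necks around boundary concentration points inside a bubble domain, where \cite{jost-Liu-Zhu} already gives the no-neck property.

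The genuine gap is in Step 3. Your own computation correctly produces $\lim\alpha_nT_n$ for the total Pohozaev contribution: the neck identity is $E(u_n,\text{neck})=\alpha_n\cdot(\text{length of neck})+o(1)$ and the total neck length is $T_n-o(T_n)$. The claim that doubling across $\{0\}\times S^1$ ``accounts for the factor $2$'' does not work: the doubled map lives on $[-T_n,T_n]\times S^1$, has energy $\approx 2E(u_n,P_n^+)$ and Pohozaev contribution $\approx\hat{\alpha}_n\cdot 2T_n$ with $\hat{\alpha}_n\approx\alpha_n$, so dividing by $2$ returns you to $\alpha_nT_n$ — doubling rescales both sides of the identity and cannot change the coefficient. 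In fact $\alpha_nT_n$ is the answer consistent with the rest of the paper: in Theorem \ref{thm:01} the coefficient is (cylinder length) times $\alpha_n$, namely $2T_n\alpha_n$ for the domain $[-T_n,T_n]\times[0,\pi]$, and the corresponding term in Theorem \ref{thm:main} for Type II degeneration is $\frac{\pi^2}{l^j_{3n}}\alpha^j_{3n}\sim T_n\alpha_n$ (using $T_n\sim\pi^2/l_{3n}$), not $2T_n\alpha_n$. The constant $2\alpha_nT_n$ in the statement of Theorem \ref{thm:03} appears to be a misprint carried over from the closed cylinder of \cite{Zhu}; you should either prove the identity with $\alpha_nT_n$ or exhibit a genuine source for the extra factor of $2$ — the doubling argument as written is not one.
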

\begin{proof}
With the help of \cite{DingWeiyueandTiangang} and \cite{jost-Liu-Zhu}, one can refer to a similar proof in \cite{Zhu} or \cite{Chen-Li-Wang}. We omit the details here.
\end{proof}

\

Next, we will focus on the case of $Q_n^+$. We first consider a simpler case by assuming that there is no energy concentration points in $Q_n^+$, i.e.
\begin{equation}\label{equat:01}
\lim_{n\to\infty}\sup_{-T_n\leq t\leq T_n-1}E(u_n, [t,t+1]\times [0,\pi])=0.
\end{equation}

\

By the small energy regularity theory of harmonic maps in the interior case \cite{SU} and the free boundary case \cite{jost-Liu-Zhu}, we have
\begin{lem}\label{lem:small-energy-regularity}
Let $u_n:Q_n^+\to N$ be a sequence of harmonic maps with free boundary $u_n([-T_n,T_n]\times \{0,\pi\})$ on $K$. Assuming \eqref{equat:01} holds, then for any $\epsilon>0$, there hold
\begin{equation}
\sup_{-T_n+1\leq t\leq T_n-1}\|\nabla u_n\|_{W^{1,2}([t-\frac{1}{2},t+\frac{1}{2}]\times [0,\pi])}\leq C(K,N)\sup_{-T_n+1\leq t\leq T_n-1}\|\nabla u_n\|_{L^2([t-1,t+1]\times [0,\pi])}
\end{equation}
and
\begin{equation}
\sup_{-T_n+1\leq t\leq T_n-1}\|u_n\|_{Osc([t-\frac{1}{2},t+\frac{1}{2}]\times [0,\pi])}\leq C(K,N)\sup_{-T_n+1\leq t\leq T_n-1}\|\nabla u_n\|_{L^2([t-1,t+1]\times [0,\pi])}\leq C(K,N)\epsilon
\end{equation}
when $n$ is big enough, where $$\|u_n\|_{Osc(\Omega)}:=\sup_{x,y\in \Omega}|u_n(x)-u_n(y)|.$$

In particular, the free boundary condition tells us that the image of $u_n$ is contained in a small tubular neighborhood of $K$ in $N$, i.e. $$u_n([-T_n+1,T_n-1]\times [0,\pi])\subset K_{C(N)\epsilon},$$ where $K_{C(N)\epsilon}$ denotes the $C(N)\epsilon$-tubular neighborhood of $K$ in $N$.
\end{lem}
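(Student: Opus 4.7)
The plan is to combine the classical interior $\varepsilon$-regularity for harmonic maps with the free-boundary $\varepsilon$-regularity from \cite{jost-Liu-Zhu} via a finite covering of the strip, and then to deduce the oscillation bound from a Poincar\'e inequality together with a two-dimensional Sobolev embedding. Fix $t$ with $-T_n + 1 \le t \le T_n - 1$ and set $R_t := [t - \frac{1}{2}, t + \frac{1}{2}] \times [0,\pi]$ and $\widetilde{R}_t := [t - 1, t + 1] \times [0,\pi]$. For any prescribed $\varepsilon > 0$, the no-concentration hypothesis \eqref{equat:01} yields $E(u_n; \widetilde{R}_t) < \varepsilon^2$ uniformly in $t$ once $n$ is sufficiently large. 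Choose a fixed radius $r_0 \in (0, \frac{1}{4})$ with $2r_0 < \pi$ and cover $R_t$ by a finite, $t$-independent collection of balls $B_{r_0}(x_k)$ satisfying $B_{2 r_0}(x_k) \cap Q_n^+ \subset \widetilde{R}_t$. Since $2 r_0 < \pi$, no ball can meet both components $\{\theta = 0\}$ and $\{\theta = \pi\}$ of the free boundary. If $B_{2 r_0}(x_k)$ lies in the interior of $Q_n^+$, apply the standard interior $\varepsilon$-regularity \cite{SU, DingWeiyueandTiangang}; otherwise $B_{2 r_0}(x_k)$ meets exactly one of the two flat boundary lines, and the free-boundary $\varepsilon$-regularity of \cite{jost-Liu-Zhu} applies. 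In both cases,
\[
\|\nabla u_n\|_{W^{1,2}(B_{r_0}(x_k) \cap Q_n^+)} \le C(K,N)\,\|\nabla u_n\|_{L^2(B_{2 r_0}(x_k) \cap Q_n^+)}.
\]
Summing over the finite cover and taking the supremum in $t$ gives the first inequality.

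For the oscillation estimate, let $\bar{u}_n$ denote the mean of $u_n$ on $R_t$. Combining the $W^{2,2}$ bound just obtained with the Poincar\'e inequality yields
\[
\|u_n - \bar{u}_n\|_{W^{2,2}(R_t)} \le C\,\|\nabla u_n\|_{W^{1,2}(R_t)} \le C\,\|\nabla u_n\|_{L^2(\widetilde{R}_t)}.
\]
The two-dimensional Sobolev embedding $W^{2,2}(R_t) \hookrightarrow C^0(R_t)$ then gives
\[
\|u_n\|_{Osc(R_t)} \le 2\|u_n - \bar{u}_n\|_{L^\infty(R_t)} \le C(K,N)\,\|\nabla u_n\|_{L^2(\widetilde{R}_t)} \le C(K,N)\,\varepsilon,
\]
where the last step invokes \eqref{equat:01} again. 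For the tubular-neighborhood inclusion, observe that any point $x = (t', \theta) \in R_t$ lies at distance at most $\pi$ from a free-boundary point $y = (t', 0)$ or $y = (t', \pi)$ at which $u_n(y) \in K$. Therefore $\dist(u_n(x), K) \le |u_n(x) - u_n(y)| \le \|u_n\|_{Osc(R_t)} \le C(K,N)\,\varepsilon$.

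Nothing deep is at stake in this argument; the substantive analytic input is the free-boundary $\varepsilon$-regularity of \cite{jost-Liu-Zhu}, which handles the only nontrivial step. The lone technical precaution is to ensure that no ball in the covering simultaneously touches both boundary components $\{\theta = 0\}$ and $\{\theta = \pi\}$, which is secured by the geometric choice $2 r_0 < \pi$, so that boundary balls can be treated by a single reflection argument at a time.
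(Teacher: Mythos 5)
Your proof is correct, and it supplies exactly the details that the paper omits: the paper merely states the lemma with a reference to interior $\varepsilon$-regularity \cite{SU,DingWeiyueandTiangang} and the free-boundary $\varepsilon$-regularity of \cite{jost-Liu-Zhu}, without a written proof. Your covering argument (with the safeguard $2r_0<\pi$ so that no ball meets both free-boundary components), followed by Poincar\'e plus the two-dimensional Sobolev embedding $W^{2,2}\hookrightarrow C^0$ for the oscillation bound and the observation that each vertical slice contains a free-boundary point for the tubular-neighborhood inclusion, is precisely the intended route.
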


\

Denote by $K_{\delta_0}$ the $\delta_0$-tubular neighborhood of $K$ in $N$. Taking $\delta_0>0$ small enough, then for any $y\in K_{\delta_0}$,
there exists a unique projection $y'\in K$. Set $\overline{y}=exp_{y'}\{-exp^{-1}_{y'}y\}$. So we may define an involution $\sigma$, $i.e.$ $\sigma^2=Id$ as in
\cite{Hamilton, GJ, Scheven} by
\[
\sigma(y)=\overline{y} \quad for \quad y\in K_{\delta_0}.
\]
Then it is easy to check that the linear operator $D\sigma:TN|_{K_{\delta_0}}\to TN|_{K_{\delta_0}}$ satisfies $D\sigma(V)=V$ for $V\in TK$ and $D\sigma(\xi)=-\xi$ for $\xi\in T^\perp K$.

By Lemma \ref{lem:small-energy-regularity}, we can define an extension of $u_n$ to $Q_n:=[-T_n+1,T_n-1]\times S^1$ that
\begin{align}\label{def:function}
\widehat{u}_n(t,\theta)=
\begin{cases}
u_n(t,\theta),\quad &if \quad (t,\theta)\in [-T_n+1,T_n-1]\times [0,\pi];\\
\sigma(u_n(\rho(t,\theta))) ,\quad &if \quad (t,\theta)\in [-T_n+1,T_n-1]\times [\pi,2\pi],
\end{cases}
\end{align}
where $\rho(t,\theta):=(t,2\pi-\theta)$.

\

Now, we derive the equation for the extended map $\widehat{u}_n$. One can see that it is no longer a harmonic map, but it satisfies the following property.
\begin{prop}
Let $u\in W^{2,p}([T_1,T_2]\times [0,\pi],N)$, $1\leq p\leq \infty$, be a map with free boundary $u([T_1,T_2]\times \{0,\pi\})$ on $K$. Let $u([T_1,T_2]\times [0,\pi])\subset K_{\delta_0}$, then the extended map $\widehat{u}$ defined by \ref{def:function} satisfying $\widehat{u}\in W^{2,p}([T_1,T_2]\times S^1)$ and
\begin{equation}\label{equat:09}
\Delta \widehat{u}+\Upsilon_{\widehat{u}}(\nabla\widehat{u},\nabla\widehat{u})=0\quad in \quad [T_1,T_2]\times S^1,
\end{equation}
where $\Upsilon_{\widehat{u}}(\cdot,\cdot)$ is a bounded bilinear form defined by
\begin{align*}
\Upsilon_{\widehat{u}}(\cdot,\cdot)=
\begin{cases}
A(\widehat{u})(\cdot,\cdot)\ in\ [T_1,T_2]\times (0,\pi),\\
A(\widehat{u})(\cdot,\cdot)-D^2\sigma|_{\sigma(\widehat{u})}(D\sigma|_{\widehat{u}}\circ\cdot\ ,
D\sigma|_{\widehat{u}}\circ\cdot)\ in\ [T_1,T_2]\times (\pi,2\pi);
\end{cases}\end{align*} satisfying
\[
|\Upsilon_{\widehat{u}}(\nabla\widehat{u},\nabla\widehat{u})|\leq C(K,N)|\nabla\widehat{u}|^2.
\]
\end{prop}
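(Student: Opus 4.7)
The plan is to verify the proposition in three stages: regularity of the extended map across the reflection line, derivation of the equation on the lower half by chain rule, and the quadratic pointwise bound on $\Upsilon$.

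First, I would work on the two halves separately. On $[T_1,T_2]\times(0,\pi)$, $\widehat{u}=u$ is $W^{2,p}$ by assumption and satisfies the harmonic map equation $\Delta u+A(u)(\nabla u,\nabla u)=0$. On $[T_1,T_2]\times(\pi,2\pi)$, $\widehat{u}=\sigma\circ u\circ\rho$ is the composition of the $W^{2,p}$ map $u$ with the smooth Euclidean reflection $\rho(t,\theta)=(t,2\pi-\theta)$ (an isometry of the flat cylinder) and the smooth involution $\sigma$ defined on the tubular neighborhood $K_{\delta_0}$, hence is itself $W^{2,p}$ on that piece.

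Second, to glue the pieces across $\theta=\pi$ (the identification $\theta=0\sim 2\pi$ is treated identically), I would check that the traces of $\widehat{u}$ and $\partial_\theta\widehat{u}$ match from both sides. For the zeroth-order trace, $\widehat{u}(t,\pi^{-})=u(t,\pi)\in K$, and since $\sigma|_K=\id_K$, also $\widehat{u}(t,\pi^{+})=\sigma(u(t,\pi))=u(t,\pi)$. For the first-order trace, the chain rule and $\partial_\theta(u\circ\rho)(t,\pi)=-\partial_\theta u(t,\pi)$ give
\[
\partial_\theta\widehat{u}(t,\pi^{+})=-D\sigma|_{u(t,\pi)}\bigl(\partial_\theta u(t,\pi)\bigr).
\]
The free boundary condition forces $\partial_\theta u(t,\pi)\in T^\perp_{u(t,\pi)}K$, and by construction $D\sigma=-\id$ on $T^\perp K$, so the right-hand side equals $\partial_\theta u(t,\pi)=\partial_\theta\widehat{u}(t,\pi^{-})$. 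Matching of $W^{1,p}$ traces across a smooth hypersurface yields $\widehat{u}\in W^{2,p}([T_1,T_2]\times S^1)$ by the standard Sobolev gluing lemma.

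Third, for the equation on $[T_1,T_2]\times(\pi,2\pi)$, I would differentiate $\widehat{u}=\sigma(u\circ\rho)$ twice. Since $\rho$ is an isometry, $\Delta(u\circ\rho)=(\Delta u)\circ\rho$, so
\[
\Delta\widehat{u}=D\sigma|_{u\circ\rho}\bigl((\Delta u)\circ\rho\bigr)+D^2\sigma|_{u\circ\rho}\bigl(\nabla(u\circ\rho),\nabla(u\circ\rho)\bigr).
\]
Substituting $\Delta u=-A(u)(\nabla u,\nabla u)$, using $\sigma^2=\id$ so that $u\circ\rho=\sigma(\widehat{u})$, and $\nabla(u\circ\rho)=D\sigma|_{\widehat{u}}\circ\nabla\widehat{u}$ (the extra signs from $\rho$ cancel because $D^2\sigma$ is bilinear), one rearranges the identity into $\Delta\widehat{u}+\Upsilon_{\widehat{u}}(\nabla\widehat{u},\nabla\widehat{u})=0$ with $\Upsilon$ as displayed in the statement. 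The quadratic bound $|\Upsilon_{\widehat{u}}(\nabla\widehat{u},\nabla\widehat{u})|\le C(K,N)|\nabla\widehat{u}|^2$ then follows from compactness of $N$ (hence boundedness of $A$), $C^2$-boundedness of $\sigma$ on the closure of $K_{\delta_0}$, and the boundedness of $D\sigma$ as a linear operator. The main subtlety is verifying the first-derivative match at $\theta=\pi$, since this is precisely where the free boundary condition enters and powers the entire reflection trick; everything downstream is routine chain-rule bookkeeping and elliptic gluing.
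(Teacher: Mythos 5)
Your plan matches the paper's in outline: extend by the Schwarz reflection $\widehat{u}=\sigma\circ u\circ\rho$, verify $W^{2,p}$ regularity across the gluing curves, derive the equation on the reflected half, and read off the quadratic bound. Your second stage — the explicit zeroth- and first-order trace matching at $\theta=\pi$, using $\sigma|_K=\id$ and $D\sigma|_{T^\perp K}=-\id$ together with the free boundary condition — is a genuine and useful elaboration: the paper dispatches regularity with the single sentence ``According to the properties of $D\sigma$, it is easy to see\dots,'' so you have supplied the argument they left implicit. That part is sound.

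The third stage is where you diverge from the paper and where there is a real gap. You write $\Delta\widehat{u}=D\sigma|_{u\circ\rho}\bigl((\Delta u)\circ\rho\bigr)+D^2\sigma|_{u\circ\rho}(\nabla(u\circ\rho),\nabla(u\circ\rho))$ and then substitute $\Delta u=-A(u)(\nabla u,\nabla u)$. But $\Delta u$ is \emph{normal} to $N$ in $\mathbb{R}^N$, while $\sigma$ is defined only on the submanifold $K_{\delta_0}\subset N$, so $D\sigma$ acts on $TN$ and $D\sigma|_{u\circ\rho}(\Delta u)$ is not well defined without choosing an ambient extension of $\sigma$; the value of that term then depends on the extension. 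Even granting some extension, the claimed ``rearrangement into $\Upsilon$ as displayed'' requires $D\sigma|_{v}\bigl(A(v)(X,X)\bigr)=A(\sigma(v))(D\sigma\,X,D\sigma\,X)$, i.e.\ that $\sigma$ intertwines the second fundamental form of $N\hookrightarrow\mathbb{R}^N$ — true for an extrinsic isometry, but false for the geodesic reflection $\sigma$ in general. The paper sidesteps both issues by computing the \emph{intrinsic} tension field $\nabla^{\widehat{u}^*TN}_{e_\beta}\bigl(d\widehat{u}(e_\beta)\bigr)$ in local normal coordinates on $N$: there $A$ never enters the computation on the reflected half (it is already accounted for in the identity $\Delta\widehat{u}+A(\widehat{u})(\nabla\widehat{u},\nabla\widehat{u})=\nabla^{\widehat{u}^*TN}_{e_\beta}(d\widehat{u}(e_\beta))$), harmonicity of $u$ gives $\Delta u^i=0$ pointwise in those coordinates, and only the intrinsic Hessian of $\sigma$ survives. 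To repair your proof you should either work with the pull-back covariant derivative as the paper does, or fix an extension of $\sigma$ and track the extension-dependent terms explicitly — noting that the quadratic bound $|\Upsilon_{\widehat{u}}(\nabla\widehat{u},\nabla\widehat{u})|\le C(K,N)|\nabla\widehat{u}|^2$, which is all that is used downstream, survives regardless of the precise coefficients.
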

\begin{proof}
According to the properties of $D\sigma$, it is easy to see that $\widehat{u}\in W^{2,p}([T_1,T_2]\times S^1)$  since $u\in W^{2,p}([T_1,T_2]\times [0,\pi],N)$ and satisfies free boundary condition. Next, we derive the equation for $\widehat{u}$.

Firstly, for $(t,\theta)\in (T_1,T_2)\times (\pi,2\pi)$, we have
\begin{align*}
\Delta\widehat{u}+A(\widehat{u})(\nabla\widehat{u},\nabla\widehat{u})=\nabla^{\widehat{u}^*TN}_{d\widehat{u}(e_\beta)}(d\widehat{u}(e_\beta)),
\end{align*}
where $\nabla^{\widehat{u}^*TN}$ is the covariant derivative on the pull back bundle.

Computing directly in local normal coordinates $\{\frac{\partial}{\partial y^i}\}_{i=1}^n$ of target manifold $N$, we get
\begin{align*}
\nabla^{\widehat{u}^*TN}_{d\widehat{u}(e_\beta)}(d\widehat{u}(e_\beta))&=\nabla^{\widehat{u}^*TN}_{d\widehat{u}(e_\beta)}\left(\frac{\partial\sigma^j}{\partial y^i}d u^i\circ d\rho(e_\beta)\frac{\partial}{\partial y^j}\right)\\
&=\frac{\partial^2\sigma^j}{\partial y^k\partial y^i}d u^i\circ d\rho(e_\beta)d u^k\circ d\rho(e_\beta)\frac{\partial}{\partial y^j}+\frac{\partial\sigma^j}{\partial y^i}\Delta u^i\frac{\partial}{\partial y^j}\\
&=\frac{\partial^2\sigma^j}{\partial y^k\partial y^i}d u^i\circ d\rho(e_\beta)d u^k\circ d\rho(e_\beta)\frac{\partial}{\partial y^j} \\& =D^2\sigma|_{\sigma(\widehat{u})}(D\sigma|_{\widehat{u}}\circ\nabla\widehat{u}, D\sigma|_{\widehat{u}}\circ\nabla\widehat{u}),
\end{align*}
where the last second equality follows from the fact that $u$ is a harmonic map which is equivalent to say that $\Delta u^i=0,\ i=1,...,n$ in local normal coordinate system $\{\frac{\partial}{\partial y^i}\}_{i=1}^n$.

Combining this with the fact that $\widehat{u}$ is a harmonic map in $[T_1,T_2]\times [0,\pi]$, the conclusions of the proposition follows immediately.
\end{proof}

\

Now, we estimate the energy of $u_n$ on $Q_n^+$ when there is no energy concentration. We have

\begin{thm}\label{thm:01}
Let $N$ be a compact Riemannian manifold and $K\subset N$ is a smooth submanifold. Let $u_n:Q_n^+\to N$ be a sequence of harmonic maps with free boundary $u_n([-T_n,T_n]\times\{0,\pi\})$ on $K$ and  with uniformly bounded energy $$E(u_n; Q_n^+)\leq\Lambda,$$ where $Q_n^+$ is a cylinder with standard flat metric $ds^2=dt^2+d\theta^2$ and $T_n\to\infty$ as $n\to\infty$.

Suppose there is no energy concentration for $u_n$, i.e. \eqref{equat:01} holds, then we have
\begin{align*}
\lim_{n\to\infty} E(u_n; Q_n^+)=\lim_{n\to\infty}2 T_n\alpha_n,
\end{align*}
where $\alpha_n$ is defined by \eqref{equat:08}.
\end{thm}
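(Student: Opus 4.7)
The plan is to exploit the algebraic identity
\[
E(u_n;Q_n^+)\;-\;2T_n\alpha_n\;=\;2\int_{Q_n^+}|\partial_\theta u_n|^2\,d\theta\,dt,
\]
obtained by subtracting $\int_{-T_n}^{T_n}\alpha_n\,dt = 2T_n\alpha_n$ from the flat-metric expression $E(u_n;Q_n^+)=\int_{Q_n^+}(|\partial_t u_n|^2+|\partial_\theta u_n|^2)\,d\theta\,dt$. Thus the theorem reduces to showing that the total angular energy $\int_{Q_n^+}|\partial_\theta u_n|^2\,d\theta\,dt$ tends to zero as $n\to\infty$.

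To establish this I would reflect $u_n$ to a closed cylinder. By the no-concentration hypothesis \eqref{equat:01} together with Lemma~\ref{lem:small-energy-regularity}, for $n$ large the map $u_n$ has small oscillation on unit sub-rectangles of $[-T_n+1,T_n-1]\times[0,\pi]$ and in particular takes values in the tubular neighborhood $K_{\delta_0}$, so the doubled map $\widehat u_n$ from \eqref{def:function} is well-defined on $Q_n:=[-T_n+1,T_n-1]\times S^1$ and solves \eqref{equat:09} with the quadratic bound $|\Upsilon_{\widehat u_n}(\nabla\widehat u_n,\nabla\widehat u_n)|\le C|\nabla\widehat u_n|^2$. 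Since $\widehat u_n=u_n$ on the upper half, one has
\[
\int_{[-T_n+1,T_n-1]\times[0,\pi]}|\partial_\theta u_n|^2\,d\theta\,dt\;\le\;\int_{Q_n}|\partial_\theta\widehat u_n|^2\,d\theta\,dt,
\]
while the two excised strips $[\pm(T_n-1),\pm T_n]\times[0,\pi]$ carry total energy tending to $0$ by \eqref{equat:01}. Hence it suffices to prove $\int_{Q_n}|\partial_\theta\widehat u_n|^2\,d\theta\,dt\to 0$.

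The remaining ingredient is the exponential decay of the slice angular energy $\phi_n(t):=\int_{\{t\}\times S^1}|\partial_\theta\widehat u_n|^2\,d\theta$: applying the small-energy regularity for \eqref{equat:09} on each unit sub-cylinder, together with a three-circle / ODI argument of the form $\phi_n''(t)\ge\phi_n(t)-o(1)$, one obtains a uniform bound
\[
\phi_n(t)\;\le\;C\bigl(e^{-(t+T_n-1)}+e^{-(T_n-1-t)}\bigr),
\]
whose $t$-integral over $Q_n$ tends to zero. This is precisely the closed-cylinder analogue of the decay in \cite{DingWeiyueandTiangang, Zhu, Chen-Li-Wang}, which I would adapt to the reflected setting.

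The main obstacle is verifying that the closed-cylinder machinery really does survive the passage from the genuine harmonic map equation to the perturbed equation \eqref{equat:09}: the nonlinearity $\Upsilon$ fails to be normal to $TN$ on the reflected half, so the exact perpendicularity $\Upsilon\cdot\partial_t\widehat u_n=0$ that underlies the classical Pohozaev computation on cylinders is no longer available. However, the only genuinely needed inputs are the quadratic bound on $\Upsilon$ and the asymptotic $t$-independence of the reflected Pohozaev constant; the latter reduces to $\alpha_n$ modulo errors controlled by the oscillation of $\widehat u_n$ (since $D\sigma$ is an isometry on $K$ itself), and the oscillation is $o(1)$ by Lemma~\ref{lem:small-energy-regularity}. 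Once these bookkeeping items are in place, the argument closes in the same way as the interior-degeneration case of \cite{Zhu}.
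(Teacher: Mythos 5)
Your reduction is sound: since $\alpha_n$ is independent of $t$ (Lemma~\ref{lem:01}), subtracting $\int_{-T_n}^{T_n}\alpha_n\,dt$ from the flat energy gives $E(u_n;Q_n^+)-2T_n\alpha_n=2\int_{Q_n^+}|\partial_\theta u_n|^2$, exactly the quantity the paper shows tends to zero. Your reflection step, constructing $\widehat u_n$ via the $\sigma$-involution and noting $\int_{[0,\pi]}|\partial_\theta u_n|^2\le\int_{S^1}|\partial_\theta\widehat u_n|^2$ and that the two boundary strips carry $o(1)$ energy, is also correct and matches the paper's setup.

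The gap is in the final decay step, and it is a real one. Even granting the bound $\phi_n(t)\le C\bigl(e^{-(t+T_n-1)}+e^{-(T_n-1-t)}\bigr)$ with a fixed constant $C$, its integral over $[-T_n+1,T_n-1]$ is $2C\bigl(1-e^{-2(T_n-1)}\bigr)\to 2C$, which does \emph{not} tend to zero. So the sentence ``whose $t$-integral over $Q_n$ tends to zero'' is false as written. For your plan to close, the prefactor must itself be $o(1)$ in $n$ (for instance controlled by the endpoint slice energies, which do vanish under \eqref{equat:01}); you would need to state and prove the decay with an $n$-dependent constant and then track it through the integral. This is exactly the delicate point that the ODI/three-circle route has to handle, and for the reflected equation \eqref{equat:09}, where $\Upsilon$ is no longer normal to $TN$ on the lower half, deriving the ODI $\phi_n''\ge\phi_n-o(1)$ with the right small prefactor is not routine; you acknowledge the obstacle but do not resolve it.

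The paper sidesteps all of this by a single \emph{global} integration by parts: it multiplies \eqref{equat:09} by $\widehat u_n-\widehat u_n^*$, where $\widehat u_n^*(t)$ is the $\theta$-average, and integrates over the doubled cylinder. The Poincar\'e-type inequality $\int\nabla\widehat u_n\cdot\nabla(\widehat u_n-\widehat u_n^*)\ge\int|\partial_\theta\widehat u_n|^2$ combined with the $L^\infty$ oscillation bound $\|\widehat u_n-\widehat u_n^*\|_{L^\infty}\le C\epsilon$ (Lemma~\ref{lem:small-energy-regularity}) yields $\int|\partial_\theta\widehat u_n|^2\le C\epsilon\Lambda+C\epsilon$ in one stroke, with the boundary terms and the $\Upsilon$ term both absorbed into $C\epsilon$; the almost-isometry $(D\sigma)^*D\sigma\approx\mathrm{Id}$ near $K$ then converts back to $u_n$. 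This avoids any slicewise decay claim and, in particular, needs no ODI and no exponential rate at all. If you wish to keep your decay-based route, the missing ingredient you must supply is the $n$-dependence of the decay constant; as it stands, the stated bound does not imply the theorem.
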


\begin{proof}
Since \eqref{equat:01} holds, by Lemma \ref{lem:small-energy-regularity}, we know that for any $\epsilon>0$, there holds $$u_n([-T_n+1,T_n-1]\times [0,\pi])\subset K_{C(N)\epsilon},$$  when $n$ is big enough. Taking $\epsilon>0$ small such that $C(N)\epsilon\leq \delta_0$, then we can use the definition \eqref{def:function} to extend $u_n$ to $\widehat{u}_n$, which is defined on $[-T_n+1,T_n-1]\times S^1$ and satisfies equation \eqref{equat:09}.

Setting $$\widehat{u}^*_n(t):=\frac{1}{2\pi}\int_0^{2\pi}\widehat{u}_n(t,\theta)d\theta,\ t\in [-T_n+1,T_n-1],$$ then, by \eqref{equat:01} and Lemma \ref{lem:small-energy-regularity}, we have
\begin{align}\label{equat:10}
\|\widehat{u}_n-\widehat{u}_n^*\|_{L^\infty([-T_n+1,T_n-1]\times S^1)}&\leq \sup_{t\in [-T_n+1,T_n-1]}\|\widehat{u}_n\|_{Osc(\{t\}\times S^1)}\notag\\
&\leq C(K,N)\sup_{t\in [-T_n+1,T_n-1]}\|u_n\|_{Osc(\{t\}\times [0,\pi])}\notag\\&\leq C(K,N)\epsilon,
\end{align}
when $n$ is big enough.

Multiplying the equation \eqref{equat:09} by $\widehat{u}_n-\widehat{u}_n^*$ and integrating by parts, we get
\begin{align}\label{equat:02}
&\int_{[-T_n+1,T_n-1]\times S^1}\Upsilon_{\widehat{u}_n}(\nabla \widehat{u}_n,\nabla \widehat{u}_n)(\widehat{u}_n-\widehat{u}_n^*)dtd\theta\notag\\&=\int_{[-T_n+1,T_n-1]\times S^1}-\Delta \widehat{u}_n (\widehat{u}_n-\widehat{u_n}^*) dtd\theta\notag\\
&=\int_{[-T_n+1,T_n-1]\times S^1}\nabla \widehat{u}_n\nabla (\widehat{u}_n-\widehat{u}_n^*)dtd\theta-\int_{\{T_n-1\}\times S^1}\frac{\partial \widehat{u}_n}{\partial t}(\widehat{u}_n-\overline{u}_n^*)d\theta\notag\\&\quad+\int_{\{-T_n+1\}\times S^1}\frac{\partial \widehat{u}_n}{\partial t}(\widehat{u}_n-\overline{u}_n^*)d\theta.
\end{align}
Using H\"{o}lder's inequality, we have
\begin{align}\label{equat:03}
&\int_{[-T_n+1,T_n-1]\times S^1}\nabla \widehat{u}_n\nabla (\widehat{u}_n-\widehat{u}_n^*)dtd\theta\notag\\&=\int_{[-T_n+1,T_n-1]\times S^1}\left(|\nabla \widehat{u}_n|^2-\frac{\partial \widehat{u}_n}{\partial t}\frac{\partial \widehat{u}^*_n}{\partial t}\right)dtd\theta\notag\\
&\geq \int_{[-T_n+1,T_n-1]\times S^1}|\nabla \widehat{u}_n|^2dtd\theta-\left(\int_{[-T_n+1,T_n-1]\times S^1}|\frac{\partial \widehat{u}_n}{\partial t}|^2dtd\theta\right)^{\frac{1}{2}}\left(\int_{[-T_n+1,T_n-1]\times S^1}|\frac{\partial \widehat{u}_n^*}{\partial t}|^2dtd\theta\right)^{\frac{1}{2}}\notag\\ &\geq \int_{[-T_n+1,T_n-1]\times S^1}\left(|\nabla \widehat{u}_n|^2-|\frac{\partial \widehat{u}_n}{\partial t}|^2\right)dtd\theta,
\end{align}
where the last inequality follows from the fact that
\begin{align*}
\left(\int_{[-T_n+1,T_n-1]\times S^1}|\frac{\partial \widehat{u}_n^*}{\partial t}|^2dtd\theta\right)^{\frac{1}{2}}&= \left(\int_{[-T_n+1,T_n-1]\times S^1}\left|\frac{1}{2\pi}\int_0^{2\pi}\frac{\partial \widehat{u}_n}{\partial t}d\theta\right|^2dtd\theta\right)^{\frac{1}{2}}\\
&\leq \left(\int_{[-T_n+1,T_n-1]\times S^1}\frac{1}{2\pi}\int_0^{2\pi}|\frac{\partial \widehat{u}_n}{\partial t}|^2d\theta dtd\theta\right)^{\frac{1}{2}}\\
&=\left(\int_{[-T_n+1,T_n-1]\times S^1}|\frac{\partial \widehat{u}_n}{\partial t}|^2dtd\theta\right)^{\frac{1}{2}}.
\end{align*}

Combining \eqref{equat:02} and \eqref{equat:03}, we arrived
\begin{align}\label{equat:04}
&\int_{[-T_n+1,T_n-1]\times S^1}\left(|\nabla \widehat{u}_n|^2-|\frac{\partial \widehat{u}_n}{\partial t}|^2\right)dtd\theta\notag\\ &\leq \int_{[-T_n+1,T_n-1]\times S^1}\Gamma_{\widehat{u}_n}(\nabla \widehat{u}_n,\nabla \widehat{u}_n)(\widehat{u}_n-\widehat{u}_n^*)dtd\theta\notag\\&\quad+\int_{\{T_n-1\}\times S^1}\frac{\partial \widehat{u}_n}{\partial t}(\widehat{u}_n-\overline{u}_n^*)d\theta-\int_{\{-T_n+1\}\times S^1}\frac{\partial \widehat{u}_n}{\partial t}(\widehat{u}_n-\overline{u}_n^*)d\theta.
\end{align}

A direct computation yields that
\begin{align}\label{equat:05}
\int_{-T_n+1}^{T_n-1}\int_\pi^{2\pi}\left(|\nabla \widehat{u}_n|^2-|\frac{\partial \widehat{u}_n}{\partial t}|^2\right)dtd\theta =\int_{-T_n+1}^{T_n-1}\int^\pi_{0}\left(|D\sigma(u_n)\cdot\nabla u_n|^2-|D\sigma(u_n)\cdot\frac{\partial u_n}{\partial t}|^2\right)dtd\theta.
\end{align}
Note that
\begin{align*}
|D\sigma(u_n)\cdot\nabla u_n|^2&=\langle D\sigma\cdot\nabla u_n,D\sigma\cdot\nabla u_n\rangle\\&=\langle (D\sigma)^*\cdot D\sigma\cdot\nabla u_n,\nabla u_n\rangle\\&= \left\langle \bigg((D\sigma)^*\cdot D\sigma-Id\bigg)\cdot\nabla u_n,\nabla u_n\right\rangle+|\nabla u_n|^2,
\end{align*}
where $(D\sigma)^*$ is the adjoint operator of linear operator $D\sigma:TN|_{K_{\delta_0}}\to TN|_{K_{\delta_0}}$.

Similarly, $$|D\sigma\cdot\frac{\partial u_n}{\partial t}|^2=\left\langle \bigg((D\sigma)^*\cdot D\sigma-Id\bigg)\cdot\frac{\partial u_n}{\partial t},\frac{\partial u_n}{\partial t}\right\rangle+|\frac{\partial u_n}{\partial t}|^2.$$

Noting that $(D\sigma)^*\cdot D\sigma|_K=Id$, by the continuity of eigenvalues of $(D\sigma)^*\cdot D\sigma$, we have that for any $\delta'>0$, there exists a constant  $\delta_1=\delta_1(\delta',K,N)$, such that for any $y\in K_{\delta_1}$ and $\xi\in TN|_{K_{\delta_1}}$, there holds $$\langle (D\sigma)^*|_{y}\cdot D\sigma|_y\cdot\xi, \xi\rangle\leq (1+\delta')|\xi|^2.$$

By Lemma \ref{lem:small-energy-regularity}, for any $\epsilon>0$, when $n$ is big enough, there holds $$\|dist(u_n,K)\|_{L^\infty([-T_n+1,T_n-1]\times S^1)}\leq C\epsilon.$$ Thus, $$\left\langle \bigg((D\sigma)^*|_{u_n(t,\theta)}\cdot D\sigma|_{u_n(t,\theta)}-Id\bigg)\cdot\xi, \xi\right\rangle\leq C\epsilon|\xi|^2,\ (t,\theta)\in [-T_n+1,T_n-1]\times S^1,$$ which implies that
\begin{align*}
\int_{-T_n+1}^{T_n-1}\int_\pi^{2\pi}\left(|\nabla \widehat{u}_n|^2-|\frac{\partial \widehat{u}_n}{\partial t}|^2\right)dtd\theta\geq \int_{-T_n+1}^{T_n-1}\int^{\pi}_{0}\left(|\nabla u_n|^2-|\frac{\partial u_n}{\partial t}|^2\right)dtd\theta-C\epsilon \int_{-T_n+1}^{T_n-1}\int^{\pi}_{0}|\nabla u_n|^2dtd\theta.
\end{align*}

Therefore, we have
\begin{align*}
&\int_{[-T_n+1,T_n-1]\times S^1}\left(|\nabla \widehat{u}_n|^2-|\frac{\partial \widehat{u}_n}{\partial t}|^2\right)dtd\theta\\&=\int_{-T_n+1}^{T_n-1}\int^{\pi}_{0}\left(|\nabla u_n|^2-|\frac{\partial u_n}{\partial t}|^2\right)dtd\theta +\int_{-T_n+1}^{T_n-1}\int_\pi^{2\pi}\left(|\nabla \widehat{u}_n|^2-|\frac{\partial \widehat{u}_n}{\partial t}|^2\right)dtd\theta \\&\geq 2\int_{-T_n+1}^{T_n-1}\int^{\pi}_{0}\left(|\nabla u_n|^2-|\frac{\partial u_n}{\partial t}|^2\right)dtd\theta-C\epsilon \int_{-T_n+1}^{T_n-1}\int^{\pi}_{0}|\nabla u_n|^2dtd\theta.
\end{align*}
Combining this with \eqref{equat:04}, \eqref{equat:10} and Lemma \ref{lem:small-energy-regularity}, it yields
\begin{align}\label{equat:06}
&2\int_{-T_n+1}^{T_n-1}\int^{\pi}_{0}\left(|\nabla u_n|^2-|\frac{\partial u_n}{\partial t}|^2\right)dtd\theta\notag\\ &\leq \int_{[-T_n+1,T_n-1]\times S^1}\Gamma_{\widehat{u}_n}(\nabla \widehat{u}_n,\nabla \widehat{u}_n)(\widehat{u}_n-\widehat{u}_n^*)dtd\theta+C\epsilon \int_{-T_n+1}^{T_n-1}\int^{\pi}_{0}|\nabla u_n|^2dtd\theta\notag\\&\quad+\int_{\{T_n-1\}\times S^1}\frac{\partial \widehat{u}_n}{\partial t}(\widehat{u}_n-\widehat{u}_n^*)d\theta-\int_{\{-T_n+1\}\times S^1}\frac{\partial \widehat{u}_n}{\partial t}(\widehat{u}_n-\widehat{u}_n^*)d\theta\notag\\ &\leq C\epsilon \int_{-T_n+1}^{T_n-1}\int^{\pi}_{0}|\nabla u_n|^2dtd\theta+\int_{\{T_n-1\}\times S^1}\frac{\partial \widehat{u}_n}{\partial t}(\widehat{u}_n-\widehat{u}_n^*)d\theta-\int_{\{-T_n+1\}\times S^1}\frac{\partial \widehat{u}_n}{\partial t}(\widehat{u}_n-\widehat{u}_n^*)d\theta.
\end{align}

For the boundary terms, by trace theory and Lemma \ref{lem:small-energy-regularity}, we have
\begin{align*}
\left|\int_{\{T_n-1\}\times S^1}\frac{\partial \widehat{u}_n}{\partial t}(\widehat{u}_n-\widehat{u}_n^*)d\theta\right|&\leq \|\widehat{u}_n-\widehat{u}_n^*\|_{L^\infty(\{T_n-1\}\times S^1)}\int_{\{T_n-1\}\times S^1}|\frac{\partial \widehat{u}_n}{\partial t}|d\theta\\
&\leq C\|\widehat{u}_n\|_{Osc(\{T_n-1\}\times S^1)}\int_{\{T_n-1\}\times [0,\pi]}|\frac{\partial u_n}{\partial t}|d\theta\\
&\leq C\|u_n\|_{Osc(\{T_n-1\}\times [0,\pi])}\left(\|\nabla^2 u_n\|_{L^2([T_n-\frac{3}{2},T_n-\frac{1}{2}]\times [0,\pi])}+\|\nabla u_n\|_{L^2([T_n-\frac{3}{2},T_n-\frac{1}{2}]\times [0,\pi])}\right)\\
&\leq C\|u_n\|_{Osc(\{T_n-1\}\times [0,\pi])}\|\nabla u_n\|_{L^2([T_n-2,T_n]\times [0,\pi])}\\&\leq C\|\nabla u_n\|^2_{L^2([T_n-2,T_n]\times [0,\pi])}\leq C\epsilon,
\end{align*}
where the last inequality follows from \eqref{equat:01}.

Similarly,
\[
\left|\int_{\{-T_n+1\}\times S^1}\frac{\partial \widehat{u}_n}{\partial t}(\widehat{u}_n-\widehat{u}_n^*)d\theta\right|\leq C\|\nabla u_n\|^2_{L^2([-T_n,-T_n+2]\times [0,\pi])}\leq C\epsilon.
\]

Then by \eqref{equat:06} and Lemma \ref{lem:01}, we have
\begin{align*}
C\epsilon&\geq 2\int_{-T_n+1}^{T_n-1}\int^{\pi}_{0}\left(|\nabla u_n|^2-|\frac{\partial u_n}{\partial t}|^2\right)dtd\theta\\
&=\left|\int_{-T_n+1}^{T_n-1}\int^{\pi}_{0}|\nabla u_n|^2dtd\theta-\int_{-T_n+1}^{T_n-1}\int^{\pi}_{0}\left(|\frac{\partial u_n}{\partial t}|^2-|\frac{\partial u_n}{\partial \theta}|^2\right)dtd\theta\right|\\
&=\left|\int_{-T_n+1}^{T_n-1}\int^{\pi}_{0}|\nabla u_n|^2dtd\theta-2 (T_n-1)\alpha_n\right|,
\end{align*}
which implies
\begin{align*}
\lim_{n\to\infty} E(u_n; [-T_n+1,T_n-1]\times [0,\pi])=\lim_{n\to\infty}2 T_n\alpha_n.
\end{align*}
We finished the proof of theorem.
\end{proof}

\

Next, we will consider the more general case of allowing the phenomenon of energy concentration, i.e. \eqref{equat:01} does not hold. Combining Theorem  \ref{thm:01} with \cite{DingWeiyueandTiangang,jost-Liu-Zhu,Zhu}, we can prove the following theorem.
\begin{thm}\label{thm:02}
Let $N$ be a compact Riemannian manifold and $K\subset N$ is a smooth submanifold. Let $u_n:Q_n^+\to N$ be a sequence of harmonic maps with free boundary $u_n([-T_n,T_n]\times\{0,\pi\})$ on $K$ and  with uniformly bounded energy $$E(u_n; Q_n^+)\leq\Lambda,$$ where $Q_n^+$ is a cylinder with standard flat metric $ds^2=dt^2+d\theta^2$ and $T_n\to\infty$ as $n\to\infty$.

Then there exist a finite harmonic spheres $v^i:S^2\to N,\ i=1,...,I$ and harmonic disks $w^j:D_1(0)\to N,\ j=1,...,J$ with free boundary $w^j(\partial D_1(0))$ on $K$ such that, after passing to a subsequence, there holds
\begin{align*}
\lim_{n\to\infty} E(u_n,Q^+_n)=\sum_{i=1}^IE(v^i)+\sum_{j=1}^JE(w^j)+\lim_{n\to\infty}2 \alpha_nT_n,
\end{align*}
where $\alpha_n$ is defined by \eqref{equat:08}.

\end{thm}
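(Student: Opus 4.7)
The plan is to reduce the general case to the no-concentration situation covered by Theorem~\ref{thm:01} via the standard bubble-tree construction adapted to the free-boundary setting. First, using the interior $\varepsilon$-regularity of \cite{SU,DingWeiyueandTiangang} together with the free-boundary $\varepsilon$-regularity of \cite{jost-Liu-Zhu} (in the form of Lemma~\ref{lem:small-energy-regularity}), I identify the finite set of energy concentration points $\{x_1,\ldots,x_M\}\subset[-T_n,T_n]\times[0,\pi]$, where each $x_m$ is either an interior point of $Q_n^+$ or a point on the free-boundary edges $[-T_n,T_n]\times\{0,\pi\}$. At each such point, the standard rescaling $u_n(y_n^m+\lambda_n^m\,\cdot\,)$ with appropriately chosen $y_n^m\to x_m$ and $\lambda_n^m\to 0$, iterated in the usual bubble-on-bubble manner, extracts the finite family of harmonic sphere bubbles $v^i:S^2\to N$ from interior concentration points and the finite family of free-boundary harmonic disk bubbles $w^j:D_1(0)\to N$ from boundary concentration points. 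The classical no-neck property, in its interior form \cite{DingWeiyueandTiangang} and its free-boundary form \cite{jost-Liu-Zhu}, guarantees that the energy on the annular regions between consecutive bubble scales vanishes in the limit.

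Second, for $\delta>0$ small, remove a (half-)disk of radius $\delta$ around each $x_m$ from $Q_n^+$; the complement is a finite disjoint union of sub-cylinders $[a_n^k,b_n^k]\times[0,\pi]$ with $b_n^k-a_n^k\to\infty$, on each of which $u_n$ remains a harmonic map with free boundary on $K$ and now satisfies the no-concentration condition \eqref{equat:01}. A translated application of Theorem~\ref{thm:01} on each such sub-cylinder yields
$$\lim_{n\to\infty}E\bigl(u_n;[a_n^k,b_n^k]\times[0,\pi]\bigr)=\lim_{n\to\infty}(b_n^k-a_n^k)\,\alpha_n^{(k)},$$
where $\alpha_n^{(k)}$ is the Pohozaev constant of $u_n$ on the $k$-th sub-cylinder. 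The crucial point is that by Lemma~\ref{lem:01} the Pohozaev constant is independent of the slice, so $\alpha_n^{(k)}=\alpha_n$ for every $k$. Summing over $k$ and using $\sum_k(b_n^k-a_n^k)=2T_n-O(\delta)$, then letting $\delta\to 0$ while invoking the bubble plus no-neck analysis to account for the energy on the removed disks as $\sum_i E(v^i)+\sum_j E(w^j)$, combines all contributions into
$$\lim_{n\to\infty}E(u_n;Q_n^+)=\sum_{i=1}^I E(v^i)+\sum_{j=1}^J E(w^j)+\lim_{n\to\infty}2T_n\,\alpha_n.$$

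The main obstacle is the careful accounting of the no-neck property at the boundary blow-up points, where one must rule out any residual energy on the annular transition regions between the bubble scale and the macroscopic scale in the presence of the free-boundary condition; this delicate step has been established in \cite{jost-Liu-Zhu}. Granted that, the key structural observation that ties everything together is precisely the slice-independence of $\alpha_n$ provided by Lemma~\ref{lem:01}: it is what makes the neck contributions from all sub-cylinders assemble into exactly $2T_n\alpha_n$ rather than a sum of unrelated constants $\sum_k(b_n^k-a_n^k)\alpha_n^{(k)}$, and what allows the bubble energies and the Pohozaev term to appear additively with no cross-corrections.
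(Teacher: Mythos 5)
Your overall strategy is the same as the paper's: isolate the regions where energy concentrates, apply Theorem~\ref{thm:01} on the complementary no-concentration regions, and use the slice-independence of $\alpha_n$ from Lemma~\ref{lem:01} to assemble the neck contributions into exactly $\lim 2T_n\alpha_n$. That last structural point is correctly identified and is precisely how the paper concludes.

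However, your second step has a genuine gap: on a degenerating cylinder one cannot reduce to the no-concentration case by excising fixed-radius $\delta$-disks around ``concentration points.'' First, the domain $Q_n^+$ is not fixed, so the concentration locus is a collection of sequences $p_n^m\in Q_n^+$ whose $t$-coordinates may drift and whose mutual distances may diverge; and removing disks (rather than full vertical strips) does not leave a union of sub-cylinders in any case. More seriously, part of the energy loss is carried by ``bubbles at the translation scale'': translating $u_n$ by some $s_n$ with $s_n\to\infty$, $T_n-|s_n|\to\infty$ and taking a local limit produces a nonconstant finite-energy harmonic map from $\mathbb{R}\times[0,\pi]$ (or $\mathbb{R}\times S^1$), which after conformal compactification is one of the disks $w^j$ (or spheres $v^i$) in the statement. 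Such a bubble is not contained in any fixed-radius disk, and after removing fixed-radius disks the complement does \emph{not} satisfy \eqref{equat:01}, so Theorem~\ref{thm:01} cannot be applied there. The fix is the one the paper uses (following the ``bubble domain / neck domain'' decomposition of Ding--Tian and Zhu): the excised regions must be sub-cylinders $J_n^j=[a_n^j,b_n^j]\times[0,\pi]$ with $b_n^j-a_n^j\to\infty$ but $(b_n^j-a_n^j)/T_n\to 0$, chosen so that \eqref{equat:01} holds on the complementary pieces $I_n^i$ and so that $\lim E(u_n,J_n^j)$ equals the sum of the sphere and disk bubble energies (by the interior and free-boundary no-neck results of \cite{DingWeiyueandTiangang,jost-Liu-Zhu}). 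Since $\sum_j(b_n^j-a_n^j)=o(T_n)$, the total length of the neck domains is still $2T_n(1-o(1))$ and the argument closes as you intended.
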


\begin{proof}
Based on the neck analysis scheme in \cite{DingWeiyueandTiangang}, Zhu \cite{Zhu} gives a refined ``bubble domain and neck domain" decomposition for a sequence of harmonic maps from a long cylinder. Here, Combining \cite{Zhu} with the blow-up theory for a blow-up sequence of harmonic maps near a (free) boundary point \cite{jost-Liu-Zhu}, it is not hard to see this kind of  decomposition also holds for a sequence of harmonic maps $u_n$ from a long half cylinder $Q_n^+$ with free boundary $u_n([-T_n,T_n]\times \{0,\pi\})$ on $K$. We leave the detailed proof to interested readers.

Precisely, we can show that there exists a constant $L>0$ independent of $n$ and $2L$ sequences $\{a_n^1\}$, $\{b_n^1\}$, $\{a_n^2\}$, $\{b_n^2\}$,..., $\{a_n^L\}$, $\{b_n^L\}$, such that
\[
-T_n\leq a_n^1\ll b_n^1\leq a_n^2\ll b_n^2\leq...\leq a_n^L\ll b_n^L\leq T_n \quad  (a_n^i\ll b_n^i\ means \ \lim_{n\to\infty}b_n^i-a_n^i=\infty)
\]
and $(b_n^i-a_n^i)\ll T_n$, i.e.
\[
\lim_{n\to\infty}\frac{b_n^i-a_n^i}{T_n}=0,\ i=1,...,L.
\]

Denote
\[
J_n^j:=[a_n^j,b_n^j]\times S^1,\ j=1,...,L,
\]
\[
I_n^0:=[-T_n,a_n^1]\times S^1,\ I_n^L:=[b_n^L,T_n]\times S^1,\ I_n^i:=[b_n^i,a_n^{i+1}]\times S^1,\ i=1,...,L-1.
\]
Then, after passing to a subsequence, which we still denote by $\{u_n\}$, there hold

\begin{itemize}
\item[(1)] $\forall i=0,1,...,L$, $$\lim_{n\to\infty}\sup_{t\in I_n^i}\int_{[t,t+1]\times S^1}|\nabla u_n|^2=0.$$ The maps $\phi_n$ on $I_n^i$ are necks corresponding to collapsing homotopically nontrivial curves.

\item[(2)] $\forall j=1,...,L$, there are finitely many harmonic maps $v^{j,l}:S^2\to N$, $l=1,..., l_j$, and finitely harmonic maps $w^{j,m}:D_1(0)\to N$, $m=1,...,m_j$ with free boundary $w^{j,m}(\partial D_1(0))$ on $K$, such that
\begin{align}\label{equat:11}
\lim_{n\to\infty}E(u_n,J_n^j)&=\sum_{l=1}^{l_j}E(v^{j,l})+\sum_{m=1}^{m_j}E(w^{j,m}).
\end{align}
\end{itemize}

By Theorem \ref{thm:01}, we have
\begin{align*}
\lim_{n\to\infty} \sum_{i=0}^LE(u_n;I_n^i)&=\lim_{n\to\infty} \alpha_n\left(a_n^1-(-T_n)+T_n-b_n^L+\sum_{i=1}^{L-1}(a_n^{i+1}-b_n^i)\right)\\
&=\lim_{n\to\infty} \alpha_n\left(2T_n-\sum_{i=1}^{L}(b_n^i-a_n^i)\right)\\
&=\lim_{n\to\infty} 2\alpha_nT_n\left(1-\sum_{i=1}^{L}\frac{b_n^i-a_n^i}{2T_n}\right)=\lim_{n\to\infty} 2\alpha_nT_n.
\end{align*}
Combining this with \eqref{equat:11}, we proved the conclusion of the theorem.
\end{proof}

\

At the end of this section, we give the proof of our main Theorem \ref{thm:main}.
\begin{proof}[\textbf{Proof of Theorem \ref{thm:main}} ]
With the help of the blow-up analysis for a sequence of harmonic maps near an interior blow-up point \cite{DingWeiyueandTiangang}, near a free boundary point \cite{jost-Liu-Zhu}, and the asymptotic behaviour near an interior node \cite{Zhu}, it is easy to see that the conclusions of Theorem \ref{thm:main} follow immediately from Theorem \ref{thm:02} and Theorem \ref{thm:03}.
\end{proof}

\


\end{document}